\documentclass[12pt]{amsart}
\usepackage{amsfonts,amssymb,amscd,amstext,mathrsfs,color}

\input xy
\xyoption{all}
 
\textwidth = 156mm
\textheight = 235mm
\evensidemargin=0mm
\oddsidemargin=0mm
\hoffset=5mm
\voffset=-21mm
\parskip =1mm
\parindent = 6mm
\linespread{1.11}
\pagestyle{plain}

\newtheorem{theorem}{Theorem}
\newtheorem{corollary}{Corollary}
\newtheorem{lemma}{Lemma}
\newtheorem{proposition}{Proposition}

\theoremstyle{definition}
\newtheorem{remark}{Remark}

\newcommand{\C}{\mathbb{C}}
\newcommand{\End}{{\operatorname{End}}}
\newcommand{\hyp}{{\operatorname{hyp}}}
\newcommand{\att}{{\operatorname{att}}}
\newcommand{\sad}{{\operatorname{sad}}}
\newcommand{\rep}{{\operatorname{rep}}}
\newcommand{\clo}{{\operatorname{clo}}}
\newcommand{\rne}{{\operatorname{rne}}}

\begin{document}

\title{Dynamics of generic endomorphisms \\ of Oka-Stein manifolds}

\author{Leandro Arosio and Finnur L\'arusson}

\address{Dipartimento Di Matematica, Universit\`a di Roma \lq\lq Tor Vergata\rq\rq, Via Della Ricerca Scientifica 1, 00133 Roma, Italy}
\email{arosio@mat.uniroma2.it}

\address{School of Mathematical Sciences, University of Adelaide, Adelaide SA 5005, Australia}
\email{finnur.larusson@adelaide.edu.au}


\thanks{2020 {\it Mathematics Subject Classification.}  Primary 32H50.  Secondary 14R10, 32M17, 32Q28, 32Q56, 37F80}

\thanks{L.~Arosio was supported by SIR grant \lq\lq NEWHOLITE -- New methods in holomorphic iteration\rq\rq, no.~RBSI14CFME, and partially supported by the MIUR Excellence Department Project awarded to the Department of Mathematics, University of Rome Tor Vergata, CUP E83C18000100006.  F.~L\'arusson was partially supported by Australian Research Council grant DP150103442.  Part of this work was done when F.~L\'arusson visited Rome in February 2020.  He thanks the University of Rome Tor Vergata for financial support and hospitality.}

\date{1 February 2021.  Minor edits 4 March 2021}

\keywords{Dynamics, Stein manifold, Oka manifold, linear algebraic group, Fatou set, Julia set, periodic point, non-wandering point, chain-recurrent point}

\begin{abstract}
We study the dynamics of a generic endomorphism $f$ of an Oka-Stein manifold $X$.  Such manifolds include all connected linear algebraic groups and, more generally, all Stein homogeneous spaces of complex Lie groups.  We give several descriptions of the Fatou set and the Julia set of $f$.  In particular, we show that the Julia set is the derived set of the set of attracting periodic points of $f$ and that it is also the closure of the set of repelling periodic points of $f$.  Among other results, we prove that $f$ is chaotic on the Julia set and that every periodic point of $f$ is hyperbolic.  We also give an explicit description of the \lq\lq Conley decomposition\rq\rq\ of $X$ induced by $f$ into chain-recurrence classes and basins of attractors.  For $X=\C$, we prove that every Fatou component is a disc and that every point in the Fatou set is attracted to an attracting cycle or lies in a dynamically bounded wandering domain (whether such domains exist is an open question).
\end{abstract}

\maketitle

\section{Introduction and main theorem} 
\label{sec:intro}

\noindent
This paper continues a line of research begun with our previous papers \cite{AL2019} and \cite{AL2020}.  Here, we investigate the dynamics of a generic endomorphism of an Oka-Stein manifold.\footnote{By an Oka-Stein manifold we simply mean a complex manifold that is both Oka and Stein.}  The setting is very general and includes all Stein homogeneous spaces of complex Lie groups, in particular all connected linear algebraic groups.  Even in the much-studied case of complex affine space $\C^n$, $n\geq 1$, most of our results are new.  The classes of Oka manifolds and Stein manifolds are fundamental in modern complex analysis and geometry.  Manifolds in their intersection have the right degree of flexibility to possess rich dynamics.

The family of endomorphisms of an Oka-Stein manifold is so large and diverse that little can be said about its dynamics without restricting the analysis to suitable subfamilies that are usually taken to be quite small (for example the family of regular polynomial endomorphisms of $\C^n$, as in \cite{BJ2000}). One basic property that fails in general is the complete invariance of the Fatou and Julia sets.  We show that this property, and many other interesting dynamical properties, are generic with respect to the compact-open topology, which is the only natural topology in this context.  Hence, somewhat surprisingly, strong dynamical theorems hold for very large subfamilies of endomorphisms of Oka-Stein manifolds.

We give several descriptions of the Fatou set and the Julia set of a generic endomorphism $f$ of an Oka-Stein manifold $X$.  In particular, we show that the Julia set is the derived set of the set of attracting periodic points of $f$ and that it is also the closure of the set of repelling periodic points of $f$.  Among other results, we prove that $f$ is chaotic on the Julia set and that every periodic point of $f$ is hyperbolic (this is one half of the Kupka-Smale theorem in our setting).  We also give an explicit description of the \lq\lq Conley decomposition\rq\rq\ of $X$ induced by $f$ into chain-recurrence classes and basins of attractors.  For $X=\C$, we prove that every Fatou component is a disc and that every point in the Fatou set is attracted to an attracting cycle or lies in a dynamically bounded wandering domain (whether such domains exist is an open question).

Before stating our first result, we need to clarify some definitions and establish some notation.  We denote by $\End\,X$ the monoid of endomorphisms of a complex manifold $X$, that is, holomorphic maps $X\to X$, with the compact-open topology, which is separable and defined by a complete metric, so $\End\,X$ is a Polish space.
\begin{itemize}
\item  $\hyp(f)$ is the set of hyperbolic periodic points of an endomorphism $f$ of $X$.  A periodic point $p$ of $f$ of period $n$ is hyperbolic if the derivative $D_p f^n$ of the $n^\textrm{th}$ iterate $f^n$ at $p$ has no eigenvalue of absolute value $1$.
\item  $\att(f)$ is the set of attracting periodic points of $f$.  The periodic point $p$ is attracting if all the eigenvalues of $D_p f^n$ have absolute value less than $1$.
\item  The periodic point $p$ is super-attracting if $D_p f^n=0$ (we consider a super-attracting point to be attracting).
\item  $\rep(f)$ is the set of repelling periodic points of $f$.  The periodic point $p$ is repelling if all the eigenvalues of $D_p f^n$ have absolute value greater than $1$.
\item  $\sad(f)$ is the set of saddle periodic points of $f$.  The periodic point $p$ is a saddle point if it is hyperbolic and some of the eigenvalues of $D_p f^n$ have absolute value less than $1$ and some have absolute value greater than $1$.
\item  $p\in X$ is an escaping point of $f$ if $f^n(p)$ converges to the point at infinity as $n\to\infty$.
\item  $\clo(f)$ is the set of points $p\in X$ whose $f$-orbit can be closed in the weak sense, meaning that for every neighbourhood $U$ of $p$ in $X$ and $V$ of $f$ in $\End\, X$, there is an endomorphism in $V$ with a periodic point in $U$.  Note that $\clo(f)$ is a closed subset of $X$.  The $f$-orbit of $p$ can be closed in the strong sense if for every neighbourhood $V$ of $f$ in $\End\, X$, there is an endomorphism in $V$ for which $p$ itself is periodic.  If $X$ is homogeneous in the weak sense that the automorphism group of $X$ acts transitively on $X$, as is the case for all known Oka-Stein manifolds, then the two notions coincide.
\item  $\Omega_f$ is the non-wandering set of $f$, that is, the set of points $p\in X$ such that for every neighbourhood $U$ of $p$, there is $k\geq 1$ such that $U\cap f^k(U)\neq\varnothing$.  Note that $\Omega_f$ is a closed subset of $X$.
\item  $F_f$ is the Fatou set of $f$, the open set of normality of the iterates of $f$.  More explicitly, $F_f$ is the set of points in $X$ with a neighbourhood $U$ such that every subsequence of the sequence of iterates of $f$ has a subsequence that converges locally uniformly on $U$ to a holomorphic map into $X$ or to the point at infinity.  Clearly, $F_f$ is backward invariant, that is, $f^{-1}(F_f)\subset F_f$.
\item  $J_f$ is the Julia set of $f$, defined as the complement of $F_f$ in $X$.  Since $F_f$ is backward invariant, $J_f$ is forward invariant, that is, $f(J_f)\subset J_f$.
\item  $\rne(f)$ is the open set of points $p\in X$ at which $f$ is robustly non-expelling, meaning that there are neighbourhoods $U$ of $p$ in $X$ and $V$ of $f$ in $\End\, X$ and a compact subset $K$ of $X$ such that $g^j(U)\subset K$ for all $g\in V$ and $j\geq 0$.  If $X$ is Stein, by Montel's theorem, $\rne(f)\subset F_f$.
\end{itemize}
The following results from our previous paper \cite{AL2020}, building on the groundbreaking work of Forn\ae ss and Sibony \cite{FS1997}, will be important here.  If $f$ is an endomorphism of an Oka-Stein manifold $X$, then 
\[ \rne(f)\cap\Omega_f = \att(f) \]
and
\[ \Omega_f \subset \clo(f) \qquad \textrm{(closing lemma)}. \]
Also, for a generic endomorphism $f$ of $X$, that is, an endomorphism in a suitable residual subset of $\End\, X$, 
\[ \overline{\hyp(f)} = \Omega_f \qquad \textrm{(general density theorem)}. \]

Our main theorem is as follows.  It is proved in Section \ref{sec:proof}.

\begin{theorem}   \label{t:main-theorem}
A generic endomorphism $f$ of an Oka-Stein manifold $X$ has the following properties.
\begin{enumerate}
\item[(a)]  Every periodic point of $f$ is hyperbolic.
\item[(b)]  The Fatou set is completely invariant, that is, $f^{-1}(F_f)=F_f$, and its connected components are Runge (in particular Stein).  Moreover,
\[F_f=\rne(f).\]
When $X$ is homogeneous, $F_f$ consists of those points of $X$ whose orbit under every sufficiently small perturbation of $f$ is relatively compact. 
\item[(c)]   The Julia set $J_f$ is completely invariant, perfect, not compact, and has no interior, so $J_f$ and $F_f$ are not empty.  Also, $J_f$ is the boundary of the set of points in $X$ with relatively compact $f$-orbit.  Moreover,
\[ J_f = \overline{\rep(f)} = \overline{\att(f)}\setminus \att(f), \]
and when $\dim X\geq 2$, 
\[ J_f = \overline{\sad(f)}. \]
When $X$ is homogeneous, $J_f$ consists of those points of $X$ that are escaping with respect to arbitrarily small perturbations of $f$.
\item[(d)]  \[\clo(f) = \Omega_f = \overline{\att(f)}=J_f\cup\att(f).\]
\item[(e)]  $f$ is chaotic on $J_f$.  If $U$ is a neighbourhood of a point in $J_f$, then $\bigcup\limits_{n\geq 0} f^n(U)$ is dense in $X$.  Every basin of attraction of an attracting cycle has $J_f$ as its boundary.
\item[(f)]  The iterates of $f$ have the same Fatou set and Julia set as $f$ itself.
\end{enumerate}
\end{theorem}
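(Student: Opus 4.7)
Since the theorem asserts that a \emph{generic} $f$ satisfies all of (a)--(f), the plan is to produce, for each item, a residual subset of $\End X$ on which it holds, and then intersect these countably many sets. Throughout we may assume that $f$ lies in the residual set of \cite{AL2020} where $\overline{\hyp(f)}=\Omega_f$, and make free use of the closing lemma $\Omega_f\subset\clo(f)$ and the identity $\rne(f)\cap\Omega_f=\att(f)$, which hold without genericity.

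The natural order of attack is (a), (d), (b), (c), (e), (f). For (a), fix $n$, consider the analytic set $\{(x,f):f^n(x)=x\}$ in $X\times\End X$, and show by a Kupka--Smale-type transversality argument that for generic $f$ every fibre consists of hyperbolic points; the Oka-Stein hypothesis provides enough holomorphic perturbations of $f$ (via dominating sprays of holomorphic vector fields on $X$) for the Baire category step to go through. Given (a), the key move is to upgrade $\overline{\hyp(f)}=\Omega_f$ to $\overline{\att(f)}=\Omega_f$: near a repelling or saddle periodic point the Oka principle produces a holomorphic perturbation with an attracting periodic point instead, and a Baire-category argument promotes this local fact into the statement that for generic $f$, every hyperbolic periodic point of $f$ is accumulated by attracting periodic points of $f$ itself. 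Combined with $\rne(f)\cap\Omega_f=\att(f)$ and the closing lemma this yields (d) in full. Taking complements in (d), together with Montel's theorem (which gives $\rne(f)\subset F_f$ on the Stein manifold $X$), forces $F_f=\rne(f)$ and $J_f=\overline{\att(f)}\setminus\att(f)$; complete invariance of $F_f$ and the Runge property of its components then follow from Stein-ness and the $f$-invariance of a plurisubharmonic exhaustion of $\rne(f)$ built from uniformly compact forward orbits. The other descriptions $J_f=\overline{\rep(f)}$ and, for $\dim X\geq 2$, $J_f=\overline{\sad(f)}$, use the same perturbation scheme as in (d), now producing repellers or saddles rather than attractors. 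Perfectness, absence of interior, and non-compactness of $J_f$ are Baire-category consequences of these density statements.

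For (e), density of $\rep(f)$ in $J_f$ together with $\overline{\bigcup_{n\geq 0}f^n(U)}=X$ for every neighbourhood $U$ meeting $J_f$ yields Devaney chaos; transitivity is proved by spreading a neighbourhood of a repelling periodic point forward under iteration. For (f), $F_f\subset F_{f^k}$ is immediate, and the reverse inclusion follows from the characterisation $F_f=\rne(f)$ together with the evident equality $\rne(f)=\rne(f^k)$. The main obstacle I foresee is the passage from $\overline{\hyp(f)}=\Omega_f$ to $\overline{\att(f)}=\Omega_f$: the density theorem of \cite{AL2020} only places hyperbolic periodic points near every non-wandering point, whereas (d) demands \emph{attracting} ones, and converting the local perturbation observation ``a saddle or repeller can be wiggled into an attractor'' into a property of a fixed generic $f$ requires a delicate Baire-category construction. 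Essentially everything else in the theorem hangs on this step.
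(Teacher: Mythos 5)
Your overall outline goes in the right direction, and you correctly single out the crux---producing attracting cycles densely near the Julia set. But your proposed route to that crux has a genuine gap. You plan to upgrade the general density theorem $\overline{\hyp(f)}=\Omega_f$ to $\overline{\att(f)}=\Omega_f$ by wiggling saddle or repelling periodic points into attracting ones. Even granting that step, this only shows $\att(f)$ is dense in $\Omega_f$. What the rest of the proof actually requires is that $\att(f)$ is dense in the \emph{a priori larger} set $X\setminus\rne(f)$. That stronger density is what lets you conclude $F_f=\rne(f)$ (take $p\in F_f\setminus\rne(f)$; the nearby attracting cycle contradicts $p$'s Fatou component not lying in a basin) and $J_f\subset\overline{\att(f)}$ (via $J_f\subset X\setminus\rne(f)$, which uses only Montel). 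Your version gives $J_f\subset\overline{\att(f)}$ only if one already knows $J_f\subset\Omega_f$---but that inclusion is not available in advance (a point of $J_f$ can be wandering, e.g.\ with an escaping or oscillating orbit), and in fact falls out only at the end of the argument. Likewise, $F_f=\rne(f)$ does not ``follow by taking complements'' from $\overline{\att(f)}=\Omega_f$, because a point of $F_f\setminus\rne(f)$ need not be non-wandering. The paper's key technical tool (its Theorem~\ref{t:key-closing-lemma}) is a closing lemma for \emph{arbitrary} points outside $\rne(f)$: one uses the escaping pseudo-orbit guaranteed by $p\notin\rne(f)$, makes $f$ non-degenerate, ensures maximal rank along the orbit, and then uses the Oka jet-interpolation property to bend $g^m(q)$ back to $q$ with any prescribed derivative there. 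This is strictly stronger than perturbing an existing hyperbolic cycle and is exactly what bridges the gap you flagged.

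Two smaller points. For (a), your one-shot transversality sketch for the period-$n$ locus is likely to run into an eigenvalue obstruction: if $a$ has minimal period $m_0\mid m$ and $D_af^{m_0}$ has a nontrivial $d$-th root of unity ($d=m/m_0$) as eigenvalue, the natural perturbation map fails to be surjective at $a$. The paper handles this by induction on the period, first making all shorter-period points hyperbolic, and the perturbations are localised by multiplying the dominating spray by bump-like holomorphic functions (requiring Oka-Stein, not just a dominating spray). For (b), forward invariance of $F_f$ and the Runge property of its components are not automatic from a psh exhaustion of $\rne(f)$; the paper also needs generic non-degeneracy of $f$ (its Proposition~\ref{p:max-rank-prop}) together with the Forn{\ae}ss--Sibony lemma (its Lemma~\ref{l:Fatou-component}). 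Your statement $\rne(f)=\rne(f^k)$ for part (f) is plausible but needs an argument; the paper instead deduces $J_{f^m}=J_f$ from $J_f=\overline{\rep(f)}$ and $\rep(f)\subset J_{f^m}$, which is cleaner.
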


\begin{remark}
(a)  Perhaps the most novel part of the theorem is the inclusion $J_f\subset\overline{\att(f)}$ for a generic endomorphism $f$.  As far as we know, this result is new even for $X=\C^n$.  It implies, for example, that any neighbourhood of a point in $J_f$ intersects infinitely many basins of attraction.  It follows that $f$ has infinitely many Fatou components.  We can think of $\att(f)$ as an infinite discrete metric space on which $f$ acts in such a way that every point has a finite orbit.  This dynamical system determines $J_f$ and the action of $f$ on~it.

(b)  If $\dim X\geq 2$ and $1\leq k\leq \dim X-1$, then $J_f$ is the closure of the set of saddle periodic points of $f$ at which the derivative of the appropriate iterate of $f$ has $k$ eigenvalues $\lambda$ with $\lvert\lambda\rvert<1$ and $\dim X-k$ eigenvalues $\lambda$ with $\lvert\lambda\rvert>1$.  This is evident from the proof of Theorem \ref{t:key-closing-lemma}.

(c)  The most important classes of Oka-Stein manifolds are connected linear algebraic groups and, more generally, Stein homogeneous spaces of complex Lie groups.  Stein manifolds with the density property or the volume density property are Oka.  The very special examples $\C^m\times\C^{*n}$, $m,n\geq 0$, $m+n\geq 1$, are of course of great interest.  There are not many known ways to produce new Oka-Stein manifolds from old.  The two properties do not go well together in this respect.  All we can say is that products, retracts, and covering spaces of Oka-Stein manifolds are Oka-Stein.  (The definitive reference on Oka theory is \cite{Forstneric2017}.  See also the survey \cite{FL2011}.)

(d)  In the proof of part (a) of the theorem, we use a dominating spray on $X$, given by the fact that for a Stein manifold, the Oka property and Gromov's ellipticity are equivalent.  In the proofs of parts (b)--(f), we directly apply the Oka property, more specifically the basic Oka property with approximation and jet interpolation.  It is one of the nontrivially equivalent formulations of the Oka property and says the following for a complex manifold $X$.

Let $S$ be a reduced Stein space, $K$ be a holomorphically convex compact subset of $S$, and $T$ be a closed analytic subvariety of $S$.  Every continuous map $S\to X$ that is holomorphic on a neighbourhood of $K\cup T$ can be deformed to a holomorphic map by a deformation that is arbitrarily small on $K$ and fixed to any finite order along $T$.

When $X$ is Oka and Stein, we can take $S=X$ and it is immediately clear that the monoid of endomorphisms of $X$ is very large.  For example, we can prescribe the values of an endomorphism of $X$ on any discrete subset of $X$.

(e)  A residual set intersected with an open set is residual in the open set, so results about generic endomorphisms of an Oka-Stein manifold $X$ also apply to open sets of endomorphisms.  When $X$ is affine algebraic, as the most important examples are, and therefore strongly deformation-retracts onto a compact subset of itself \cite[Theorem 1.1]{HM1997}, every homotopy class of endomorphisms is open.  So are unions of homotopy classes, of course, such as the set of all endomorphisms that induce a given endomorphism of a particular homotopy, homology, or cohomology group.

(f)  {\it Non-hyperbolicity is generic.}  While there is no standard definition of hyperbolic dynamics on a noncompact set, having both repelling periodic points and saddle periodic points dense in the Julia set rules out a generic endomorphism of an Oka-Stein manifold of dimension at least 2 being hyperbolic on its Julia set in any reasonable sense.

In the 1-dimensional case, Rempe-Gillen and Sixsmith have proposed a definition of hyperbolicity for transcendental entire functions \cite[Theorem and Definition 1.3]{RS2017}.  Their hyperbolicity property implies that the set of critical values is bounded.  For both $\C$ and $\C^*$, using the persistence of isolated critical points, an argument similar to the proof of Proposition \ref{p:more-about-Julia}(a) shows that the set of critical values of a generic endomorphism is not relatively compact.
\end{remark}

The remainder of the paper is organised as follows. In the next section we prove Theorem \ref{t:main-theorem}.  In Section \ref{sec:Fatou}, we
present two results about Fatou components of generic endomorphisms, the first for the complex plane $\C$ and the second for an arbitrary Oka-Stein manifold.  In the final Section \ref{sec:Conley}, we consider Conley's fundamental theorems on general dynamical systems in the setting of an Oka-Stein manifold.

\section{Proof of the main theorem} 
\label{sec:proof}

\noindent
We start by proving part (a) of Theorem \ref{t:main-theorem}.
 Let $X$ be an Oka-Stein manifold.  Let $K$ be a compact subset of $X$ and $m\geq 1$ be an integer.  Define $\mathscr{T}(m,K)\subset \End\,X$ to be the open set of endomorphisms of $X$ such that every periodic point $p\in K$ of minimal period at most $m$ is transverse.
 (A periodic point $p$ of an endomorphism $f$ of minimal period $n$ is transverse if $1$ is not an eigenvalue of the derivative $D_p f^n$.)  Note that we do not require the whole cycle of $p$ to be contained in $K$.  Define $\mathscr{H}(m,K)\subset \End\,X$ to be the open set of endomorphisms of $X$ such that every periodic point in $K$ of minimal period at most $m$ is hyperbolic. 

Here is the idea of the proof of the theorem.  We will show that for all compact subsets $K$ of $X$ and all $m\geq 1$, $\mathscr{H}(m,K)$ is dense in $\End\,X$.  The sets $\mathscr{T}(m,K)$ are more natural with respect to transversality theory and will help us prove the density of $\mathscr{H}(m,K)$.  Then we consider an exhaustion $(K_m)$ of $X$ and define
\[\mathscr{G}=\bigcap_{m\geq 1} \mathscr{H}(m,K_m).\]
The set $\mathscr G$ is residual in $\End\,X$ and every periodic point of an endomorphism in $\mathscr G$ is hyperbolic.

The density of $\mathscr{H}(m,K)$ is proved by induction.  The steps are the following.

\begin{proposition}
Let $K$ be a compact subset of an Oka-Stein manifold $X$.  Then $\mathscr{T}(1,K)$ is dense in $\End\,X$.
\end{proposition}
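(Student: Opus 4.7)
The plan is to translate transversality at fixed points into transversality of the graph $\Gamma_f=\{(x,f(x)):x\in X\}$ against the diagonal $\Delta\subset X\times X$: a fixed point $p$ of $f$ is transverse in the sense used here (that is, $1$ is not an eigenvalue of $D_pf$) if and only if $\Gamma_f$ meets $\Delta$ transversally at $(p,p)$. The remaining task is then to produce, by an arbitrarily small perturbation of $f$, an endomorphism whose graph is transverse to $\Delta$ at every point of $\Delta$ lying over $K$.

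To build a sufficiently rich holomorphic family of perturbations, I would invoke Gromov's theorem: since $X$ is Stein and Oka, there is a dominating spray $s\colon E\to X$, namely a holomorphic vector bundle $E\to X$ and a holomorphic map $s$ with $s(0_x)=x$ and $D_{0_x}s|_{E_x}\colon E_x\to T_xX$ surjective for every $x$. The pullback $f^*E$ is a holomorphic vector bundle on the Stein manifold $X$; by Cartan's Theorem A together with compactness of $K$ there exist finitely many global holomorphic sections $\xi_1,\dots,\xi_N$ of $f^*E$ whose values span $E_{f(x)}$ at every $x\in K$. Set
\[
\Phi(t,x)=s\bigl(t_1\xi_1(x)+\cdots+t_N\xi_N(x)\bigr),\qquad (t,x)\in\C^N\times X.
\]
Then $\Phi$ is holomorphic, $\Phi(0,\cdot)=f$, and the partial derivative $\partial_t\Phi(0,x)\colon\C^N\to T_{f(x)}X$ is surjective for every $x\in K$.

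Now form $G\colon\C^N\times X\to X\times X$, $G(t,x)=(x,\Phi(t,x))$. The surjectivity of $\partial_t\Phi$ on $\{0\}\times K$, together with compactness, yields an open neighbourhood $\Omega\times U$ of $\{0\}\times K$ on which $G$ is a submersion, and hence transverse to $\Delta$. Thus $M=G^{-1}(\Delta)\cap(\Omega\times U)$ is a complex submanifold of dimension $N$. The usual parametric transversality principle says that $t_0\in\Omega$ is a regular value of the projection $\pi\colon M\to\Omega$ if and only if $\Phi(t_0,\cdot)$ meets $\Delta$ transversally on $U$, which is exactly the statement that every fixed point of $\Phi(t_0,\cdot)$ lying in $U$ is transverse. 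Sard's theorem gives that the regular values of $\pi$ are dense in $\Omega$, so I can pick $t_0$ arbitrarily close to $0$; the endomorphism $g=\Phi(t_0,\cdot)\in\End\,X$ is then as close to $f$ as desired in the compact-open topology and lies in $\mathscr{T}(1,K)$, since $K\subset U$.

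The delicate step is setting up the family in the second paragraph, where both ingredients of the hypothesis are essential: the Oka/elliptic side supplies the dominating spray, while the Stein/Cartan~A side globalises the sections needed to span $E_{f(x)}$ uniformly across the compact set $K$. With the family in hand, the remainder is a routine parametric transversality plus Sard argument transplanted without fuss into the holomorphic category.
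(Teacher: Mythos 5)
Your proof is correct, but it takes a genuinely different route from the paper's. The paper dispatches the case $m=1$ in one line by invoking the holomorphic Thom transversality theorem for maps from a Stein manifold to an Oka manifold (\cite[Corollary~8.8.7]{Forstneric2017}), applied to $x\mapsto(x,f(x))$ and the diagonal $\Delta$. You instead \emph{reprove} the needed special case of that theorem from first principles: you build a finite-dimensional holomorphic family of perturbations of $f$ by composing a dominating spray $s\colon E\to X$ (available since a Stein Oka manifold is elliptic) with finitely many global sections of $f^*E$ spanning the fibres over $K$ (Cartan~A plus compactness), and then run the standard Thom--Smale parametric transversality argument with Sard's theorem. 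This is essentially the mechanism that underlies the cited transversality theorem, and it is also in the same spirit as the paper's own spray-based perturbation used later for the harder case $m\geq 2$, where a more localised version of the construction (cutoff functions $b_j$ supported near small patches $V_j$) is needed to keep control of lower-period cycles. What your version buys is self-containment and explicitness; what it costs is length. Two small points worth noting for polish: $G$ being a submersion near $\{0\}\times K$ is stronger than needed (transversality to $\Delta$ would suffice, though the submersion statement is of course true and convenient); and when you say \lq\lq $\Phi(t_0,\cdot)$ meets $\Delta$ transversally\rq\rq\ you mean $G(t_0,\cdot)=(\mathrm{id},\Phi(t_0,\cdot))$, which is clear from context but should be stated precisely.
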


\begin{proof}
This is an immediate consequence of Thom's transversality theorem for holomorphic maps from a Stein manifold to an Oka manifold \cite[Corollary 8.8.7]{Forstneric2017}.
\end{proof}

\begin{proposition}
Let $K$ be a compact subset of an Oka-Stein manifold $X$.  Then $\mathscr{H}(m,K)$ is dense in $\mathscr{T}(m,K)$ for all $m\geq 1$.
\end{proposition}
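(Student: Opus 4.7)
The plan is as follows. Fix $f \in \mathscr{T}(m,K)$. Transversality makes each periodic point $p \in K$ of minimal period $n \leq m$ an isolated solution of $f^n(x) = x$, since $D_p f^n - I$ is invertible. Compactness of $K$ thus leaves only finitely many such points, whose forward orbits form finitely many cycles $C_1,\dots,C_r$ of periods $n_1,\dots,n_r \leq m$. Set $T = C_1 \cup \cdots \cup C_r$, a finite (hence closed analytic) subvariety of the Stein manifold $X$; the points of $T$ need not all lie in $K$. For each cycle $C_i$ with base point $p_i$, the derivative $M_i = D_{p_i} f^{n_i}$ is the composition of the linear maps $A_q := D_q f$ over $q \in C_i$, and by hypothesis $1$ is not among its eigenvalues. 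Since hyperbolic matrices form an open dense subset of $\End(T_{p_i} X)$ and small perturbations of the factors $A_q$ produce generic perturbations of the product $M_i$, I can pick $A_q'$ arbitrarily close to $A_q$ for each $q \in T$ such that every new product $M_i'$ is hyperbolic.

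The heart of the proof is to realise these prescribed $0$- and $1$-jet data by a holomorphic endomorphism $f'$ of $X$ close to $f$ on $K$. I would first build a continuous map $f_0 \colon X \to X$ that agrees with $f$ outside arbitrarily small neighbourhoods of the points of $T$, is holomorphic on a neighbourhood of $T$, and has $f_0(q) = f(q)$ and $D_q f_0 = A_q'$ for every $q \in T$. A convenient construction uses a dominating holomorphic spray $s \colon X \times \C^N \to X$, which exists because $X$ is Oka-Stein and hence Gromov-elliptic: set $f_0(x) = s(f(x), \sigma(x))$ for a smooth section $\sigma$ that is holomorphic on a neighbourhood of $T$ with the appropriate first-order data and is supported in a small neighbourhood of $T$. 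The basic Oka property with approximation and jet interpolation, recalled in Remark~(d) of Section~\ref{sec:intro}, applied to the Stein source $S = X$, a holomorphically convex compact $L \supset K$, and the subvariety $T$, then deforms $f_0$ to a holomorphic $f' \in \End X$ that matches $f_0$ along $T$ to first order and is arbitrarily close to $f_0$, hence to $f$, on $K$.

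By construction $T$ is $f'$-invariant with the same cycles, and $D_{p_i}(f')^{n_i} = M_i'$ is hyperbolic for every $i$. Cauchy estimates turn compact-open closeness of $f'$ to $f$ into $C^1$-closeness on a smaller compact set, so the implicit function theorem applied at each transverse periodic point of $T$ shows that in some neighbourhood of $T$ the cycles of $T$ remain the only periodic points of $f'$ of minimal period $\leq m$. On the complementary compact set $K$ minus that neighbourhood of $T$, $f^n(x) \neq x$ uniformly for $n \leq m$, so closeness of $f'$ to $f$ prevents new periodic points of period $\leq m$ from appearing there either. Hence $f' \in \mathscr{H}(m,K)$, as required. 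The main obstacle is the second paragraph: producing a global holomorphic self-map of $X$ with prescribed $1$-jet on the finite set $T$ and arbitrarily small perturbation on $K$, especially when $T \cap K \neq \varnothing$. This is precisely what the basic Oka property with approximation and jet interpolation provides, and it is where the Oka-Stein hypothesis plays its essential role.
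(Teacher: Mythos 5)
Your proof follows essentially the same route as the paper's: identify the finite set of periodic cycles meeting $K$, perturb the first-order jets along these cycles to make the return maps hyperbolic, realise the perturbation by a holomorphic endomorphism close to $f$ on $K$, and then invoke persistence and openness to conclude that nothing new of low period appears. The paper compresses the jet-perturbation step into a reference to \cite[Lemma~1]{AL2020}; you make it explicit, which is the only real difference.

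However, your construction of $f_0$ does not quite fit the hypothesis of the Oka property you invoke. You set $f_0(x) = s(f(x),\sigma(x))$ with $\sigma$ smooth, holomorphic only near $T$, and supported in a small neighbourhood of $T$. The basic Oka property with approximation and jet interpolation requires the input map to be holomorphic on a neighbourhood of $L\cup T$; but on the transition region where $\sigma$ passes from holomorphic to zero --- a region that necessarily meets $K$ whenever $T\cap K\neq\varnothing$, which is the nontrivial case --- your $f_0$ is merely continuous, so the Oka deformation cannot be applied as stated. The fix is to skip the smooth intermediate step entirely: since $T$ is finite (a closed analytic subvariety), Cartan--Oka--Weil interpolation on the Stein manifold $X$ gives a \emph{holomorphic} $\sigma\colon X\to\C^N$, vanishing on $T$, with the prescribed (small) derivatives along $T$, and arbitrarily small in sup-norm on a holomorphically convex compact $L\supset K\cup T$. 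Then $\psi = s(f(\cdot),\sigma(\cdot))$ is already a holomorphic endomorphism of $X$, close to $f$ on $L$, with the desired $1$-jets along $T$, and no further Oka deformation is needed. With this correction your persistence argument goes through and recovers the paper's proof; it also makes your use of a dominating spray do genuine work (directly, rather than as a detour through a continuous map).
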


\begin{proof}
Let $f\in\mathscr{T}(m,K)$.  Let $\Lambda$ be the set of periodic points of $f$ of minimal period at most $m$ in $K$.  It is finite, of cardinality, say, $N$.  By persistence of transverse periodic points, there is a neighbourhood $U$ of $\Lambda$ in $X$ and a neighbourhood $\mathscr N$ of $f$ in $\End\,X$ such that every $g\in\mathscr{N}$ has exactly $N$ periodic points of minimal period at most $m$ in $\overline U$.  Arguing as in \cite[Lemma 1]{AL2020}, we obtain a sequence $(g_n)$ in $\End\,X$ converging to $f$, such that for all $n$, every point in $\Lambda$ is a hyperbolic periodic point of minimal period at most $m$ for $g_n$.  Hence for $n$ large enough, all the periodic points of $g_n$ of minimal period at most $m$ in $\overline U$ are hyperbolic.  The result now follows from the fact that the non-existence of periodic points of minimal period at most $m$ in $K\setminus U$ is an open condition.
\end{proof}

\begin{proposition}
Let $K$ be a compact subset of an Oka-Stein manifold $X$.  Then, for all $m\geq 2$, $\mathscr{T}(m,K)\cap \mathscr{H}(m-1,K)$ is dense in $\mathscr{H}(m-1,K)$.
\end{proposition}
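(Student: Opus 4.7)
The plan is a parametric transversality argument driven by a dominating spray on $X$. Since $\mathscr{H}(m-1,K)$ is open in $\End\,X$, a neighbourhood $\mathscr{N}$ of $f$ lies inside $\mathscr{H}(m-1,K)$, so the task reduces to approximating $f$ by endomorphisms in $\mathscr{T}(m,K)\cap\mathscr{N}$. Because $X$ is Stein and Oka, it is elliptic in Gromov's sense, and so carries a dominating spray $s\colon X\times\C^N\to X$, i.e.\ $s(x,0)=x$ and $\partial_w s(x,0)\colon\C^N\to T_xX$ is surjective for every $x$. For $t$ in a small ball $B\subset\C^N$ centred at $0$, set $f_t(x)=s(f(x),t)\in\End\,X$; then $f_0=f$ and, after shrinking $B$, the entire family lies in $\mathscr{N}$.

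Now consider the holomorphic map
\[ \Phi\colon B\times X\to X\times X,\qquad \Phi(t,x)=\bigl(x,f_t^m(x)\bigr), \]
and the diagonal $\Delta\subset X\times X$. Iterating the chain rule expresses $\partial_t f_t^m(x)$ as a sum of $m$ terms, the last of which is $\partial_w s(f_t^m(x),t)$; this last term surjects onto $T_{f_t^m(x)}X$ for every $t\in B$, by continuity in $t$ and the domination property at $t=0$. Hence $D\Phi$ is surjective onto $T(X\times X)$ at every point of $\Phi^{-1}(\Delta)$, so $\Phi$ is transverse to $\Delta$ and $\Phi^{-1}(\Delta)$ is a complex submanifold of $B\times X$ of dimension $\dim B$. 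A direct computation of its tangent space shows that the projection $\pi\colon\Phi^{-1}(\Delta)\to B$ has a critical point at $(t,x)$ precisely when $1$ is an eigenvalue of $Df_t^m(x)$, i.e.\ exactly when $x$ is a non-transverse periodic point of $f_t$ of period dividing $m$.

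By the holomorphic Sard theorem applied to $\pi$, a holomorphic map between equidimensional complex manifolds, the critical values of $\pi$ have measure zero in $B$, so regular values are dense in $B$ and may be chosen arbitrarily close to $0$. For any regular value $t$ of $\pi$, every fixed point of $f_t^m$ in $X$ is transverse; in particular every periodic point of $f_t$ of minimal period $m$ in $K$ is transverse. Since $f_t\in\mathscr{N}\subset\mathscr{H}(m-1,K)$, every periodic point of $f_t$ of minimal period at most $m-1$ in $K$ is hyperbolic and hence transverse. This places $f_t\in\mathscr{T}(m,K)\cap\mathscr{H}(m-1,K)$ arbitrarily close to $f$, giving the required density. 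The main point requiring care is the chain-rule computation together with the observation that the last term already carries all the surjectivity we need; once this is in hand, the rest reduces to a standard application of Sard's theorem.
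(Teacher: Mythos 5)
Your parametric-transversality framework (introduce a spray, form $\Phi(t,x)=(x,f_t^m(x))$, apply Sard/the parametric transversality theorem to the projection of $\Phi^{-1}(\Delta)$ to parameter space) is the same skeleton the paper uses, and your observation that critical points of that projection are exactly the non-transverse fixed points of $f_t^m$ is correct. The fatal gap is the claim that, because the last chain-rule term $\partial_w s(f_t^m(x),t)$ is surjective, the full sum $\partial_t f_t^m(x)$ is surjective, and hence $\Phi$ is transverse to $\Delta$. Surjectivity of one summand does not imply surjectivity of a sum of linear maps. Concretely, at $t=0$ the chain rule gives
\[
\partial_t f_t^m(x)\big|_{t=0}=\sum_{j=1}^{m} D\!\left(f^{\,m-j}\right)\!\!\left(f^j(x)\right)\circ \partial_w s\!\left(f^j(x),0\right),
\]
and the other $m-1$ terms can cancel the last one. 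For a clean counterexample take $X=\C$, $m=2$, $f(z)=-z$, the translation spray $s(z,t)=z+t$, and $K=\{1\le|z|\le 2\}$, so that $0\notin K$ and $f\in\mathscr{H}(1,K)$ trivially. Then $f_t(z)=-z+t$ and $f_t^2=\mathrm{id}$ for every $t$, so $\partial_t f_t^2\equiv 0$; the "last term'' is $1$ but the sum is $1+(-1)\cdot 1=0$. More generally $f(z)=e^{2\pi i/m}z$ with the translation spray gives $f_t^m=\mathrm{id}$, so your global perturbation $f_t=s(f(\cdot),t)$ gives no transversality at all even though $f\in\mathscr{H}(m-1,K)$. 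The hypothesis $f\in\mathscr{H}(m-1,K)$ only rules out resonances coming from orbits of minimal period strictly less than $m$ (this is precisely the obstruction isolated in the Remark following the paper's proposition); it does nothing to prevent cancellation across a genuine period-$m$ orbit.

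What is missing is a mechanism to decouple the perturbation at the different points $x,f(x),\ldots,f^{m-1}(x)$ of an orbit. The paper achieves this by first isolating a small neighbourhood $U$ of the finitely many (persistent, hyperbolic) periodic points of minimal period $\le m-1$, then covering $K\setminus U$ by small balls $V_j$ and using bump functions $b_j$ that are close to $1$ on $\overline V_j$ and close to $0$ on $f(\overline V_j),\ldots,f^{m-1}(\overline V_j)$ to \emph{localise} the spray: the perturbation $\varphi_{t_j}^{V_j}(x)=s(x,b_j(x)t_j)$ is essentially active only on $V_j$ and negligible along the rest of the orbit. Then the chain-rule sum is dominated by the one surjective term and the remaining terms are controlled by the explicit $\epsilon/(2C(m-1))$ estimate, so $d_0\Theta$ stays in a ball of full-rank maps. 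This localisation is the content of the proof and cannot be replaced by a single globally coherent spray perturbation $f_t=s(f(\cdot),t)$, as the example above shows. If you retain your Sard framework, you must replace your one-parameter family by a multi-parameter family built from bump-function-cutoff sprays as in the paper before the transversality of $\Phi$ can be established.
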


\begin{proof}
Let $f_0\in \mathscr{H}(m-1,K)$.  Recall that $ \mathscr{H}(m-1,K)$ is open.  We will obtain a perturbation of $f_0$, that is, a continuous map $f:P\to\End\,X$, where the parameter space $P$ is a neighbourhood of the origin $0$ in some $\C^k$, with $f(0)=f_0$,
such that there are arbitrarily small $t\in P$ with $f_t=f(t)\in \mathscr{T}(m,K)$.  Equivalently, we will show that there are arbitrarily small $t\in P$ such that the map 
\[\rho_m(f_t) : X\to X\times X,\quad  x\mapsto (x, f^m_t(x)),\]
is transverse to the diagonal $\Delta\subset X\times X$ on $K$.  Indeed, considering the derivative 
\[ d_a\rho_m(f_t) : T_aX\to T_aX\oplus T_aX, \quad v\mapsto (v,d_af_t^m),\]
we see that $d_a\rho_m(f_t)(T_aX)\cap \Delta=\{0\}$ if and only if $1$ is not an eigenvalue of $d_af_t^m$.  By the parametric transversality theorem \cite[Theorem 3.2.7]{Hirsch1976}, this is the case if the associated map
\begin{equation}\label{parametricperturbation}
F: X\times P\to X\times X,\quad F(x,t)=(x,f^m_t(x)),
\tag{*}\end{equation} 
is $C^1$ and transverse to the diagonal $\Delta$ on $K\times P$.

Consider the finite set $\Lambda$ of periodic points of $f_0$ of minimal period at most $m-1$ in $K$.  Let $U$ be a neighbourhood of $\Lambda$ and let $\mathscr{N}$ be a neighbourhood of $f_0$ in $\mathscr{H}(m-1,K)$.  By persistence of hyperbolic periodic points, we can choose $U$ and $\mathscr{N}$ small enough that no $g\in \mathscr{N}$ has periodic points of minimal period $m$ in $\overline U\cap K$.  Since $\mathscr{N}\subset \mathscr{H}(m-1,K)$, it follows that for all $g\in \mathscr{N}$, the map $\rho_m(g): x\mapsto (x, g^m(x))$ is transverse to $\Delta$ on $\overline U\cap K$. 

Take $x\in K\setminus U$ and let $V\subset X$ be a neighbourhood of $x$.  Since $x\not\in \Lambda$, the points $x, f_0(x), \ldots,f_0^{m-1}(x)$ are distinct.  Hence, by choosing $V$ small enough, we can ensure that for $j=1,\ldots, m-1$, the set $f_0^j(\overline V)$ is contained in a compact set $L_j$ 
such that
\begin{itemize}
\item  the sets $\overline V, L_1, \dots, L_{m-1}$ are mutually disjoint, and
\item the union $\overline V\cup  L_1\cup \cdots\cup L_{m-1}$ is $\mathscr{O}(X)$-convex. 
\end{itemize}
Let $b$ be a holomorphic function on $X$, close to 1 on $\overline V$ and close to 0 on $ L_1\cup  \dots\cup L_{m-1}$.  Since $K\setminus U$ is compact, there is a finite cover $\{V_1, \ldots, V_\ell$\} of $K\setminus U$ and associated holomorphic functions $b_1, \ldots, b_\ell$.  Let $s: X\times \C^r\to X$ be a dominating spray.  Consider the perturbations 
\[\varphi_t^{V_j}(x)= s(x, b_j(x)t), \quad j=1,\ldots, \ell.\]
Define the perturbation
\[f:X\times (\C^r)^\ell\to X, \quad f_t(x)=f(x,t_1,\ldots, t_\ell)=\varphi_{t_1}^{V_1}\circ\dots\circ \varphi_{t_\ell}^{V_\ell}\circ f_0(x).\]
To complete the proof we just need to verify that for $k=1,\ldots, \ell$, the associated map 
\[F:X\times \C^{r\ell}\to X\times X, \quad F(x,t)=(x, f_t^m(x)),\]
is transverse to $\Delta$ on $V_k\times \{0\}$.
Fix $k$ and let $a\in V_k$ have $f_0^m(a)=a$.  Set $t_j=0$ for $j\neq k$.  Fix $x=a$ and let $t_k$ vary.  It suffices to show that the map 
\[ \Theta: \C^r\to X, \quad t_k\mapsto (\varphi_{t_k}^{V_k}\circ f_0)^m(a), \]
is a submersion at the origin.  Denoting by $\tilde s: X\times \C^r\to X$ the modified spray with $\tilde s(x,t)=s(x,b_k(x)t)$, we see that the derivative $d_0\Theta: \C^r \to T_aX$ is given by
\[ d_0\Theta=\frac{\partial \tilde s}{\partial t}\bigg\vert_{(a,0)}+ d_{f_0^{m-1}(a)}f_0\circ \frac{\partial \tilde s}{\partial t}\bigg\vert_{(f_0^{m-1}(a),0)}+\cdots+ d_{f_0(a)}f_0^{m-1}\circ \frac{\partial \tilde s}{\partial t}\bigg\vert_{(f_0(a),0)}.\]
Choose a hermitian metric $h$ on $X$.  Let $C>0$ such that for $j=1,\ldots, m-1$ and for all $x\in \overline{f_0^j(V_k)}$, we have
$\lVert d_x f_0^{m-j}\rVert\leq C$ (using the operator norm associated to $h$).  Since $s$ is dominating, there is $\epsilon>0$ such that for all $x\in \overline {V_k}$, the ball $B\bigg(\dfrac{\partial  s}{\partial t}\bigg\vert_{(x,0)}, \epsilon\bigg)$ in the space of linear maps from $\C^k$ to $T_xX$ is contained in the open subset of maps of maximal rank.  We can choose $b_k$ close enough to 1 on $\overline{V_k}$ that for all $x\in\overline{V_k}$,
\[\frac{\partial \tilde s}{\partial t}\bigg\vert_{(x,0)}\in B\left(\frac{\partial  s}{\partial t}\bigg\vert_{(x,0)}, \frac{\epsilon}{2}\right),\]
and close enough to 0 on $f_0(\overline V_k)\cup\dots \cup f_0^{m-1}(\overline V_k)$ that for all $y\in f_0(\overline V_k)\cup\dots \cup f_0^{m-1}(\overline V_k)$, 
\[\left\|\frac{\partial \tilde s}{\partial t}\bigg\vert_{(y,0)}\right\|\leq \frac{\epsilon}{2C(m-1)}.\]
It follows that $d_0\Theta$ belongs to the ball $B\bigg(\dfrac{\partial  s}{\partial t}\bigg\vert_{(x,0)}, \epsilon\bigg)$ and thus has maximal rank.
\end{proof}

\begin{remark}
Let $f_0\in\End\,X$, let $K\subset X$ be compact, and fix $m\geq 1$.  One could try to avoid the induction process and find a perturbation $f: P\to \End\,X$ such that the associated map $F$ in \eqref{parametricperturbation} above is $C^1$ and transverse to the diagonal $\Delta$ on $K\times P$.   A strategy to obtain such a perturbation could be to show that if $a\in K$ satisfies $f_0^m(a)=a$, then the linear map 
\[  A=\frac{\partial f_t^m(x)}{\partial t}\bigg\vert_{(a,0)}: \C^r\to T_aX\]
is surjective.  There is, however, a simple obstruction to the surjectivity of $A$.  Assume that the minimal period $m_0$ of $a$ is smaller that $m$ and let $d=m/m_0$.  Assume moreover that $B=d_af_0^{m_0}$ admits as an eigenvalue a $d$-th root $\zeta\neq 1$ of unity.  Then $A$ is equal to 
\[ A=(B^{d-1}+B^{d-2}+\cdots +I)\circ T,\]
where $T:\C^r\to T_aX$ is a linear map.  The map $B^{d-1}+B^{d-2}+\cdots +I: T_aX\to T_aX$ is not surjective since $\zeta$ is an eigenvalue of $B$, so $A$ cannot be surjective.
 This is why, in the induction process, we take care of points with period smaller than $m$ first.
\end{remark}

\bigskip

A key ingredient in the proof of the remainder of Theorem \ref{t:main-theorem}, parts (b)--(f), is the following result, which builds on the proofs of the closing lemma and general density theorem in our previous paper \cite[Theorem 2(c) and Corollary 1(b)]{AL2020}.

\begin{theorem}   \label{t:key-theorem}
For a generic endomorphism $f$ of an Oka-Stein manifold $X$, the set $X\setminus \rne(f)$ lies in the closure of $\att(f)$, equals the closure of $\rep(f)$, and, if $\dim X\geq 2$, equals the closure of $\sad(f)$.
\end{theorem}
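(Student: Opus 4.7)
The plan is a three-step strategy. Step~1 is an Oka construction that, given any $p\in X\setminus\rne(f)$ and any neighbourhood of $f$, produces a nearby endomorphism with a hyperbolic periodic point of a prescribed multiplier type arbitrarily close to $p$. Step~2 upgrades this density statement to a residual subset of $\End\,X$ via Baire category. Step~3 handles the easy reverse inclusions for $\rep$ and $\sad$ using the results of~\cite{AL2020}.

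For Step~1, fix $p\in X\setminus\rne(f)$, a neighbourhood $\mathscr{N}$ of $f$ determined by $\epsilon$-closeness on a compact set $L$, and a neighbourhood $U$ of $p$; choose a compact $K\supset L\cup\overline U$. The failure of robust non-expulsion at $p$ supplies $g\in\mathscr{N}$ (within half the tolerance), $y\in U$, and a minimal $j\geq 1$ with $g^j(y)\notin K$. The orbit points $y_i:=g^i(y)$, $i=0,\ldots,j$, are pairwise distinct, since any earlier repetition would confine the orbit to $K$. Applying the basic Oka property with approximation on $K$ and $1$-jet interpolation on the closed analytic subvariety $T=\{y_j\}\subset X\setminus K$, one produces a holomorphic $h\in\mathscr{N}$ arbitrarily close to $g$ on $K$ and satisfying $h(y_j)=y_0$ with $D_{y_j}h=M$ for a freely prescribed linear map $M\colon T_{y_j}X\to T_{y_0}X$. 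Cauchy estimates give $D_{y_i}h\approx D_{y_i}g$ for $i<j$, whence
\[ D_{y_0}h^{j+1}\approx M\circ L^{(g)},\quad L^{(g)}:=D_{y_{j-1}}g\circ\cdots\circ D_{y_0}g, \]
and since $h^{j+1}(y_0)\approx y_0$ as well, the inverse function theorem produces an exact fixed point of $h^{j+1}$ near $y_0$, i.e., a periodic point of $h$ in $U$ with derivative close to $M\circ L^{(g)}$. Taking $M=0$ yields a super-attracting cycle. After a preliminary perturbation of $g$ using a dominating spray on $X$, which exists by the equivalence of the Oka property with Gromov's ellipticity for Stein manifolds, $L^{(g)}$ becomes invertible, and then $M=\alpha (L^{(g)})^{-1}$ with $|\alpha|>1$ yields a repelling cycle, while if $\dim X\geq 2$, $M=B\circ(L^{(g)})^{-1}$ with $B$ of mixed spectrum yields a saddle cycle.

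For Step~2, let $(V_n)$ be a countable basis of the topology of $X$. For each $n$ and each $\star\in\{\att,\rep,\sad\}$, let $\mathscr{A}_n^\star:=\{g\in\End\,X:V_n\cap\star(g)\neq\varnothing\}$; this is open by persistence of hyperbolic periodic points, so its topological boundary is nowhere dense and $\End\,X\setminus\partial\mathscr{A}_n^\star$ is open and dense. Step~1 implies that any $f$ in the interior of $\End\,X\setminus\mathscr{A}_n^\star$ must satisfy $V_n\subset\rne(f)$, because a witness $p\in V_n\setminus\rne(f)$ would yield, via Step~1, an endomorphism of that interior with a $\star$-periodic point in $V_n$, a contradiction. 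Hence the complement of the boundary is contained in $\{f\in\End\,X:V_n\cap\star(f)\neq\varnothing\text{ or }V_n\subset\rne(f)\}$. Intersecting over all $n$ and all admissible $\star$ yields a residual $\mathscr{G}\subset\End\,X$; for $f\in\mathscr{G}$, any $p\in X\setminus\rne(f)$, and any $\star$, every basic neighbourhood $V_n\ni p$ meets $\star(f)$, giving $X\setminus\rne(f)\subset\overline{\star(f)}$. Step~3 is immediate: periodic points are non-wandering so $\rep(f)\cup\sad(f)\subset\Omega_f$; \cite{AL2020} gives $\Omega_f\cap\rne(f)=\att(f)$, and since $\rne(f)$ is open, $\overline{\rep(f)},\overline{\sad(f)}\subset X\setminus\rne(f)$.

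The main obstacle is the Oka construction of Step~1, in particular the invertibility of $L^{(g)}$ required for the repelling and saddle cases; the preliminary perturbation via a dominating spray, combined with the flexibility of jet interpolation away from $K$, resolves this and allows the multiplier spectrum of the resulting periodic point to be prescribed freely.
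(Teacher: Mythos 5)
Your overall strategy (a closing-lemma construction producing a hyperbolic periodic point of prescribed type near any $p\notin\rne(f)$, promoted to a residual set by the $\End X\setminus\partial S_n$ trick) is precisely the paper's. Step~2 and Step~3 are correct; Step~3 via $\Omega_f\cap\rne(f)=\att(f)$ is a clean alternative to the Cauchy-estimate argument in \cite[Claim~3]{AL2019} that the paper cites.

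The one place where your argument has a genuine gap is Step~1, specifically the passage from the Oka-produced $h$ to an \emph{exact} periodic point. You interpolate the $1$-jet only at $T=\{y_j\}$ and approximate $g$ on $K$, then invoke the inverse function theorem using $h^{j+1}(y_0)\approx y_0$ and $D_{y_0}h^{j+1}\approx M\circ L^{(g)}$. But the Oka property as stated controls $h$ near $K$ and at $y_j$ to first order only; it gives no a priori $C^1$-bounds on $h$ in a punctured neighbourhood of $y_j$. Since $h^j(y_0)$ is only \emph{close} to $y_j$, the estimate $h(h^j(y_0))\approx y_0$ and the Cauchy-type bound on $D_{h^j(y_0)}h$ are not justified without such control, and the quantitative IFT domain could collapse. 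This is fixable: either enlarge the approximation compact to a holomorphically convex set $K\cup\overline{B(y_j,r)}$ (so $h$ approximates the local model $\phi$ in a neighbourhood of $y_j$ and the needed $C^1$-bounds follow from Cauchy estimates), or simply interpolate the $1$-jet along the entire orbit $\{y_0,\dots,y_j\}$, which is what the paper does; then $y_0$ is an \emph{exact} periodic point of $h$ of period $j+1$, $(h^{j+1})'(y_0)=M\circ L^{(g)}$ exactly, and the IFT is avoided altogether. You should also make explicit that $M\circ L^{(g)}$ must be chosen to avoid the eigenvalue $1$ if you insist on the IFT route. The preliminary spray perturbation to make $L^{(g)}$ invertible is plausible but under-argued: perturbing $g$ near $y_0$ moves $y_1,\dots,y_j$, so you would need an iterative transversality argument; the paper instead passes to a generic non-degenerate $g$ (Proposition~\ref{p:max-rank-prop}) and uses Lemma~\ref{l:max-rank-lemma} to move $q$ into the dense open set where $g^m$ has maximal rank, which is cleaner.
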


To prove the theorem we use the following closing lemma.  The term non-degenerate in its proof refers to an endomorphism that has maximal rank at some point or, in other words, is a local biholomorphism outside a proper subvariety.

\begin{theorem}   \label{t:key-closing-lemma}
Let $f$ be an endomorphism of an Oka-Stein manifold $X$, let $K\subset X$ be compact, and let $p\in X\setminus \rne(f)$.  Let $W$ be a neighbourhood of $f$ in $\End\,X$ and $V$ be a neighbourhood of $p$ in $X$.  Then:
\begin{enumerate}
\item[(a)]  There are $h\in W$ and $q\in V$ such that $q$ is a super-attracting periodic point of $h$ and the $h$-orbit of $q$ leaves $K$.
\item[(b)]  There are $h\in W$ and $q\in V$ such that $q$ is a repelling periodic point of $h$ and the $h$-orbit of $q$ leaves $K$.
\item[(c)]  Suppose that $\dim X\geq 2$.  There are $h\in W$ and $q\in V$ such that $q$ is a saddle periodic point of $h$ and the $h$-orbit of $q$ leaves $K$.
\end{enumerate}
\end{theorem}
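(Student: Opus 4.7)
The plan is to combine the hypothesis $p\notin\rne(f)$, which delivers a nearby endomorphism whose orbit of a nearby point escapes any preassigned compact set, with the basic Oka property with approximation and jet interpolation, which lets us close that orbit while prescribing the derivative at its terminal point. The type of the resulting periodic cycle is then determined by that choice of derivative.

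First, enlarge $K$ to a compact set $L\supset K\cup\overline V$. Since $p\notin\rne(f)$, there exist $g\in W$ arbitrarily close to $f$, a point $q_0\in V$, and an integer $N\geq 1$ with $g^N(q_0)\notin L$. Taking $N$ minimal gives $q_j:=g^j(q_0)\in L$ for $0\leq j<N$, and first-exit minimality forces $q_0,q_1,\ldots,q_N$ to be pairwise distinct, since a repetition $q_i=q_j$ with $i<j<N$ would trap the entire forward orbit inside $L$. A further small transversality-type perturbation, in the spirit of the propositions above, allows us to assume $g$ is non-degenerate and that none of $q_0,\ldots,q_{N-1}$ lies on the critical locus of $g$, so that
\[ M := D_{q_{N-1}}g\circ\cdots\circ D_{q_0}g \colon T_{q_0}X\to T_{q_N}X \]
is an isomorphism. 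This non-degeneracy reduction is the principal technical obstacle, since the ability to realise the desired Jordan type at $q_0$ in the final step relies on $M$ being invertible.

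Second, set $T=\{q_0,q_1,\ldots,q_N\}$, a closed discrete analytic subvariety of $X$, and choose a compact, $\mathscr{O}(X)$-convex neighbourhood $K^\flat$ of $K\cup\{q_0,\ldots,q_{N-1}\}$ that does not contain $q_N$; such a $K^\flat$ exists because $q_N\notin L$ is distinct from all the other $q_j$'s and $X$ is Stein. Using coordinate charts and cut-offs, construct a continuous self-map of $X$ that agrees with $g$ on a neighbourhood of $K^\flat$ (hence has the $1$-jet of $g$ at each $q_j$ with $j<N$) and, on a neighbourhood of $q_N$, is a holomorphic map sending $q_N$ to $q_0$ with derivative $A$ at $q_N$, where $A$ is to be chosen. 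Apply the basic Oka property with approximation on $K^\flat$ and $1$-jet interpolation along $T$ to deform this continuous map into a holomorphic endomorphism $h$ that is close enough to $g$ on $K^\flat$ to lie in $W$. By construction $q_0$ is a periodic point of $h$ of minimal period $N+1$ whose cycle contains $q_N\notin K$, and
\[ D_{q_0}h^{N+1} = A\circ M. \]

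The three parts then follow by choosing $A$ suitably. For (a), take $A=0$, so that $D_{q_0}h^{N+1}=0$ and $q_0$ is super-attracting. For (b), take $A=\lambda M^{-1}$ with any $|\lambda|>1$, so that $D_{q_0}h^{N+1}=\lambda\,\mathrm{Id}$ has all eigenvalues equal to $\lambda$ and $q_0$ is repelling. For (c), assuming $\dim X\geq 2$, take $A=BM^{-1}$ where $B$ is any invertible endomorphism of $T_{q_0}X$ with some eigenvalues of modulus less than $1$ and some greater than $1$, so that $q_0$ is a saddle. Everything beyond the initial non-degeneracy reduction is a direct application of the Oka property in the Oka--Stein setting.
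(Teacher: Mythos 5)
Your proposal is correct and follows essentially the same route as the paper's proof: use $p\notin\rne(f)$ to produce an escaping orbit segment for a nearby map, reduce to the non-degenerate case so that the derivative of the appropriate iterate along that segment is invertible (the paper does this via its Proposition \ref{p:max-rank-prop} and Lemma \ref{l:max-rank-lemma}), close the orbit by the Oka property with approximation on a large holomorphically convex compact and $1$-jet interpolation along the orbit, and then prescribe the derivative at the re-entry point to make the resulting cycle super-attracting, repelling, or saddle. The only (harmless) variations are cosmetic: you record the pairwise distinctness of $q_0,\dots,q_N$ explicitly and choose a separate $\mathscr O(X)$-convex compact $K^\flat$, whereas the paper approximates directly on $L$ and lets jet interpolation absorb any coincidences among $q,g(q),\dots,g^{m-1}(q)$.
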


\begin{proof}
Take a holomorphically convex compact subset $L$ of $X$ containing $K$.  Since $p\in X\setminus \rne(f)$, there are $g\in W$ and $q\in V$ such that the $g$-orbit of $q$ is not contained in $L$.  Say $g^k(q)\in L$ for $0\leq k<m$ and $g^m(q)\notin L$.  By Proposition \ref{p:max-rank-prop} below, we may assume that $g$ is non-degenerate, and by Lemma \ref{l:max-rank-lemma} below, we may assume that $g^m$ has maximal rank at $q$.

Let $\phi:X\to X$ be continuous, equal to $g$ on a neighbourhood of $L$, and holomorphic on a neighbourhood of $g^m(q)$ with $\phi(g^m(q))=q$ and an arbitrarily prescribed derivative at $g^m(q)$.  Since $X$ is Stein and Oka, $\phi$ can be deformed to $h\in\End\,X$, arbitrarily close to $g$ on $L$, such that the 1-jet of $h$ coincides with that of $g$ at the points $q, g(q), \ldots, g^{m-1}(q)$, and such that $h$ takes $g^m(q)$ to $q$ with the prescribed derivative at $g^m(q)$.  Then $h$ is arbitrarily close to $g$ on $L$ with $q$ as a periodic point and $h^m(q)\notin K$.  Now
\[ (h^{m+1})'(q) = h'(g^m(q)) (g^m)'(q),\]
so we can get $q$ to be super-attracting by taking $h'(g^m(q))=0$.  Since $(g^m)'(q)$ is non-singular, we can get $q$ to be a repelling periodic point or, when $\dim X\geq 2$, a saddle periodic point by a suitable choice of $h'(g^m(q))$.
\end{proof}

\begin{lemma}  \label{l:max-rank-lemma}
Let $g$ be a non-degenerate endomorphism of a complex manifold $X$.  Let $m\geq 1$.  Then there is a dense open subset $V$ of $X$ such that the $m^\textrm{th}$ iterate $g^m$ has maximal rank at every point of $V$.
\end{lemma}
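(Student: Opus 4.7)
Let $n = \dim X$ and set $S := \{x \in X : \mathrm{rank}\, D_x g < n\}$, the zero set of the Jacobian determinant of $g$ in local coordinates, hence an analytic subvariety of $X$; by the non-degeneracy assumption $S \neq X$. For ease of exposition I assume $X$ is connected, so proper analytic subvarieties of $X$ are nowhere dense. By the chain rule, $D_x g^k = D_{g^{k-1}(x)}g \circ \cdots \circ D_{g(x)}g \circ D_x g$ is a composition of endomorphisms of $n$-dimensional tangent spaces, and this composition has rank $n$ if and only if every factor has rank $n$, i.e., if and only if $g^j(x) \notin S$ for all $j = 0, 1, \ldots, k-1$. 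Thus $V_k := \{x \in X : D_x g^k \text{ has rank } n\} = X \setminus \bigcup_{j=0}^{k-1} g^{-j}(S)$ is the complement of an analytic subvariety and is therefore open. Once I show $V_m \neq \varnothing$, this complement will be proper, hence nowhere dense, so $V := V_m$ will be the required dense open set.

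I would establish $V_k \neq \varnothing$ by induction on $k$. The base case $k = 1$ is the non-degeneracy of $g$. For the inductive step, fix $x \in V_k$ and choose an open neighbourhood $U \subset V_k$ of $x$. Since $D_y g^k$ has full rank at every $y \in U$, the restriction $g^k|_U$ is a local biholomorphism, hence an open map, and therefore $g^k(U)$ is open in $X$. Because $S$ has empty interior, $g^k(U) \not\subset S$, so I can pick $x' \in U$ with $g^k(x') \notin S$. Then $D_{g^k(x')} g$ has rank $n$, $D_{x'} g^k$ has rank $n$ (as $x' \in U \subset V_k$), and the composition $D_{x'} g^{k+1} = D_{g^k(x')} g \circ D_{x'} g^k$ has rank $n$ as well, showing that $x' \in V_{k+1}$.

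The only step requiring a moment's thought is the passage from full rank of $g^k$ on $U$ to openness of $g^k(U)$ and thereby to $g^k(U) \not\subset S$; this is just the open mapping principle for local biholomorphisms between equidimensional complex manifolds, but it is precisely what prevents the union $\bigcup_{j=0}^{m-1} g^{-j}(S)$ from exhausting $X$. With it in hand, the induction closes and the lemma follows.
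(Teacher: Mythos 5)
Your proof is correct and rests on the same key observation as the paper's: because $g$ has maximal rank on a dense set, the image under $g^k$ of a nonempty open subset of full-rank points is open and therefore escapes the singular set $S$, so the sets $g^{-j}(S)$, $0\le j<m$, cannot fill $X$. The paper encodes this by proving that the preimage under $g$ of a dense open set is dense open and intersecting finitely many such preimages, whereas you encode it by identifying $\bigcup_{j=0}^{m-1} g^{-j}(S)$ as a proper analytic subvariety, hence nowhere dense in the connected case; the two are minor variants of the same argument.
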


\begin{proof}
Let $U$ be an open dense subset of $X$.  Then the open set $g^{-1}(U)$ is also dense.  Indeed, suppose that there exists a nonempty open subset $W$ such that $g(W)\cap U =\varnothing$.  Since $g$ has maximal rank on a dense set, there are points in $W$ at which $g$ has maximal rank, so $g(W)$ has nonempty interior, which contradicts $U$ being dense.

Now let $U$ be the set of points where $g$ has maximal rank.  Then $V=U\cap g^{-1}(U)\cap\cdots\cap g^{1-m}(U)$ is open and dense and $g^m$ has maximal rank at every point of $V$.
\end{proof}

\begin{proposition}  \label{p:max-rank-prop}
Let $X$ be an Oka-Stein manifold and $p\in X$.  A generic endomorphism of $X$ is a local biholomorphism at $p$.  Hence, a generic endomorphism of $X$ is non-degenerate.
\end{proposition}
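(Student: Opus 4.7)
The plan is to show that for fixed $p\in X$, the set
\[ \mathscr{L}(p)=\{f\in\End\, X : D_pf \textrm{ is invertible}\} \]
is open and dense in $\End\, X$, and is therefore residual since $\End\, X$ is a Polish space. The second sentence of the proposition follows immediately, since a local biholomorphism at $p$ has maximal rank at $p$ and so is non-degenerate.

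Openness of $\mathscr{L}(p)$ would follow from Cauchy estimates. Choose holomorphic coordinates (available because $X$ is Stein) in a neighbourhood $U$ of $p$, a chart $V$ around $f_0(p)$, and a small closed polydisc $\overline{W}\subset U$ centred at $p$ with $f_0(\overline{W})\subset V$. For $f$ sufficiently close to $f_0$ on $\overline{W}$ in the compact-open topology one has $f(\overline{W})\subset V$, and Cauchy's formula applied componentwise gives that the matrix of $D_pf$ in these charts is close to that of $D_pf_0$; since invertible matrices form an open set, $\mathscr{L}(p)$ is open.

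For density, given any $f_0\in\End\, X$, the idea is to produce nearby perturbations via a dominating spray. Since $X$ is Oka and Stein, it is Gromov-elliptic and thus admits a spray $s:X\times\C^N\to X$ with $s(x,0)=x$ and $\partial_y s(x,0):\C^N\to T_xX$ surjective for every $x$. Set $n=\dim X$, choose global holomorphic functions $b_1,\dots,b_n$ on $X$ (available by Steinness) with $b_i(p)=0$ and $(db_{i,p})$ forming a basis of $T_p^*X$, and choose $v_1,\dots,v_n\in\C^N$ so that $u_i:=\partial_y s(f_0(p),0)v_i$ form a basis of $T_{f_0(p)}X$. Consider the holomorphic family
\[ h_t(x)=s\bigl(f_0(x),\,\textstyle\sum_{i=1}^n t_ib_i(x)v_i\bigr),\qquad t\in\C^n, \]
which satisfies $h_0=f_0$ and $h_t\to f_0$ locally uniformly as $t\to 0$.

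The main step, and the only real obstacle, would be to verify generic nonvanishing of the determinant of the derivative at $p$. Using $b_i(p)=0$, the chain rule yields
\[ D_ph_t=D_pf_0+\sum_{i=1}^n t_i\,u_i\otimes db_{i,p}. \]
Along the diagonal direction $t_1=\cdots=t_n=T$, the operator $\sum_i u_i\otimes db_{i,p}$ sends the basis of $T_pX$ dual to $(db_{i,p})$ to the basis $(u_i)$ of $T_{f_0(p)}X$, so it is an isomorphism and $D_ph_t$ is invertible for all sufficiently large $T$. Hence $P(t):=\det D_ph_t$, expressed in any fixed bases of $T_pX$ and $T_{f_0(p)}X$, is a nonzero polynomial on $\C^n$, whose zero locus is a proper analytic subvariety. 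Arbitrarily small $t$ outside this subvariety give $h_t\in\End\, X$ arbitrarily close to $f_0$ with $D_ph_t$ invertible, completing the density step. The combined role of the Stein coordinates (supplying a basis of $T_p^*X$ by $db_{i,p}$) and the dominating spray (supplying a basis of $T_{f_0(p)}X$ by $u_i$) is precisely what guarantees that the perturbation family is rich enough to hit an invertible derivative, regardless of $D_pf_0$.
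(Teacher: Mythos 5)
Your proof is correct, and it takes a genuinely different route from the paper's. The paper proves density by embedding $X$ in $\C^m$, retracting a tubular neighbourhood holomorphically onto $X$, adding a cut-off linear map $\epsilon\chi\lambda$ to $f$ so that the retracted sum has maximal rank at $p$ for a generic linear automorphism $\lambda$, and finally invoking the basic Oka property with approximation and jet interpolation to replace this continuous perturbation by a holomorphic one; the intermediate object is merely continuous and there is an (unexamined) genericity claim about the choice of $\lambda$. You instead construct an explicit holomorphic family of endomorphisms $h_t(x)=s\bigl(f_0(x),\sum_i t_ib_i(x)v_i\bigr)$ using a dominating spray, which exists because a Stein manifold is Oka iff it is Gromov-elliptic. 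Your chain-rule computation $D_ph_t=D_pf_0+\sum_i t_i\,u_i\otimes db_{i,p}$ is correct (it uses $b_i(p)=0$ and $\partial_x s(\cdot,0)=\mathrm{id}$), and the observation that $\sum_i u_i\otimes db_{i,p}$ is an isomorphism makes $\det D_ph_t$ a nonzero polynomial in $t$, so generic small $t$ give maximal rank. This sidesteps the embedding, the tubular neighbourhood, and the Oka interpolation step entirely, staying within the holomorphic category throughout; it also does not need the paper's implicit normalisation $f(p)=p$. The cost is that you need two ingredients from the ambient theory (global functions $b_i$ cutting out local coordinates at $p$ via Cartan's theorems, and the spray $s$); the paper's proof bundles both into the single black box of Oka interpolation. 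Your openness argument via Cauchy estimates on polydiscs in local charts is standard and fine (though local coordinates exist on any complex manifold, not only Stein ones; Steinness is not needed there).
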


\begin{proof}
The set of endomorphisms of maximal rank at $p$ is open.  We need to show that it is dense.  Embed $X$ in $\C^m$ with a tubular neighbourhood that retracts holomorphically onto $X$.  Take $p$ to be the origin in $\C^m$ and suppose that $f\in\End\,X$ is singular at $0$ with $f(0)=0$.  Add to $f$ the map $\epsilon\chi\lambda$, where $\lambda$ is a linear automorphism of $\C^m$, $\epsilon>0$, and the continuous function $\chi:X\to[0,1]$ is $1$ on a large compact subset of $X$ and zero near infinity.  If $\epsilon$ is small enough, the image of the new map lies in the tubular neighbourhood, and by a suitable generic choice of $\lambda$, the new map composed by the retraction has maximal rank at $0$.  Finally apply the basic Oka property with approximation and jet interpolation and obtain an endomorphism close to $f$ and of maximal rank at $0$.
\end{proof}

\begin{proof}[Proof of Theorem \ref{t:key-theorem}]
Let $\{U_n:n\geq 1\}$ be a countable basis for the topology of $X$.  Let $S_n$ be the open set of all $f\in\End\,X$ such that $f$ has an attracting cycle intersecting $U_n$.  Then $G=\bigcap \End\,X\setminus \partial S_n$ is a residual subset of $\End\,X$.  We will show that if $f\in G$ and $p\in X\setminus\rne(f)$, then $p$ lies in the closure of $\att(f)$.

Suppose that $p\in U_n$.  It suffices to show that $\att(f)\cap U_n\neq\varnothing$.  By definition of $G$, we have $f\notin\partial S_n$.  Hence either $f\in S_n$, in which case $\att(f)\cap U_n\neq\varnothing$ is immediate, or $f\notin\overline S_n$.  The latter case is ruled out by Theorem \ref{t:key-closing-lemma}.  Indeed, by the theorem, if $p\in U_n\setminus\rne(f)$, then there is an endomorphism arbitrarily close to $f$ with a super-attracting periodic point in $U_n$.

The proofs for $\rep(f)$ and $\sad(f)$ are analogous.  It easily follows from Cauchy estimates, as in \cite[Claim~3]{AL2019}, that $\rep(f)$ and $\sad(f)$ are subsets of $X\setminus\rne(f)$.
\end{proof}

\begin{lemma}   \label{l:basins}
Let $f$ be an endomorphism of a complex manifold $X$.
\begin{enumerate}
\item[(a)]  The basin of attraction of an attracting cycle of $f$ is contained in $\rne(f)$ \ldots
\item[(b)]  \ldots and is a union of Fatou components.
\item[(c)]  $\att(f)$ is a closed subset of $F_f$ and hence, if $X$ is Stein, of $\rne(f)$.
\end{enumerate}
\end{lemma}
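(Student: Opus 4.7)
For part (a), my plan is to trap orbits near the attracting cycle using robust neighbourhoods. Writing the cycle as $\{a_0,\ldots,a_{n-1}\}$ with $f(a_i)=a_{i+1}$ (indices mod $n$), I choose pairwise disjoint open neighbourhoods $W_i\ni a_i$ with compact closures such that $f(\overline{W_i})\subset W_{i+1}$, which is possible since $f^n$ is a local contraction near each $a_i$. Continuity of composition in $\End\,X$ yields a neighbourhood $V$ of $f$ with $g(\overline{W_i})\subset W_{i+1}$ for all $g\in V$, so $g$-orbits starting in $W=\bigcup_i W_i$ remain in the compact set $\overline{W}$. For $p$ in the basin, some iterate $f^N(p)$ lies in $W$; joint continuity of $(g,x)\mapsto g^N(x)$ then lets me shrink to obtain $U\ni p$ and a smaller $V\ni f$ with $g^N(U)\subset W$ for all $g\in V$. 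A routine inductive use of continuity further bounds the finitely many initial pieces $g^k(U)$, $k<N$, in a compact set, completing the verification that $p\in\rne(f)$.

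For part (b), the basin $B$ already lies in $F_f$: writing $B_i=\{x:f^{nj}(x)\to a_i\}$, every $x\in B_i$ has a neighbourhood sent into $W_i$ by some $f^{nN}$, after which the subsequences $(f^{nj+k})_j$ for $0\le k<n$ converge uniformly to $a_{(i+k)\bmod n}$, giving normality. The content of (b) is to show that each open set $B_i$ is also relatively closed in every Fatou component $C$ it meets, so that $B=\bigsqcup_i B_i$ decomposes into unions of Fatou components. Given $p_k\in C\cap B_i$ with $p_k\to q\in C$, normality on $C$ yields a subsequence $f^{nj_\ell}$ converging locally uniformly to some holomorphic $\phi\colon C\to X$; escape to infinity is ruled out because $f^{nj_\ell}(p_k)\to a_i$. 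Continuity forces $\phi(q)=\lim_k\phi(p_k)=a_i$, so $f^{nj_\ell}(q)\to a_i$, and once an iterate of $q$ lands in $W_i$ the contracting dynamics of $f^n$ near $a_i$ pins down the full sequence $f^{nj}(q)\to a_i$. Hence $q\in B_i$, so $B_i\cap C$ is clopen in the connected set $C$ and equals $C$ whenever nonempty.

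Part (c) then follows from (b) by a finiteness argument. Each attracting periodic point lies in the basin of its own cycle, which by (b) is a union of Fatou components and therefore lies in $F_f$; so $\att(f)\subset F_f$. For closedness, suppose $p_k\in\att(f)$ converges to $p\in F_f$. Letting $C$ be the Fatou component of $p$, we have $p_k\in C$ eventually, and (b) forces $C$ to lie in the basin of each $p_k$'s cycle. Since distinct attracting cycles have disjoint basins, those cycles must coincide for large $k$, so $(p_k)$ takes values in a single finite cycle and $p$ lies in it, hence in $\att(f)$. For Stein $X$, the inclusion $\rne(f)\subset F_f$ recalled in the introduction, combined with (a), yields $\att(f)\subset\rne(f)$, and closedness in $F_f$ transfers to closedness in the subspace $\rne(f)$.

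The main obstacle I anticipate is the step in (b) upgrading subsequential convergence $f^{nj_\ell}(q)\to a_i$ to full-sequence convergence. This crucially exploits that the cycle is \emph{attracting} rather than merely non-repelling: once $q$'s orbit enters the local contracting neighbourhood $W_i$, it cannot leave, and all subsequent iterates in each residue class mod $n$ converge to the appropriate cycle point. The remainder is routine continuity, normality, and connectedness, requiring no identity theorem or finer complex-analytic input.
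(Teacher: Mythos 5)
Your proof is correct. Parts (a) and (c) follow essentially the same route as the paper: in (a) you trap orbits in a forward-invariant compact neighbourhood of the cycle and propagate the invariance to nearby endomorphisms by continuity of composition, and in (c) you deduce closedness of $\att(f)$ in $F_f$ from the structure established in (b). Part (b), however, uses a different key step. The paper takes a neighbourhood $V\subset U\cap B$ on which $(f^{mn})$ converges uniformly to a cycle point $q$, extracts a holomorphic normal limit $g$ on the Fatou component $U$, and applies the identity theorem (a holomorphic map on a connected domain that is constant on a nonempty open subset is constant) to conclude $g\equiv q$ on all of $U$, hence $U\subset B$. You instead decompose $B$ into the pieces $B_i$ attracted to individual cycle points and show that $B_i\cap C$ is clopen in each Fatou component $C$: for closedness you take $p_k\to q$ in $C\cap B_i$, extract a normal limit $\phi$, use mere continuity of $\phi$ to get $\phi(q)=a_i$, and then promote the subsequential convergence $f^{nj_\ell}(q)\to a_i$ to full-sequence convergence via the local contraction of $f^n$ near $a_i$. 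Your route dispenses with the identity theorem at the cost of the extra contraction step; the paper's is shorter granted that tool. Both ultimately rest on normality and the connectedness of the Fatou component. One point worth making explicit in your write-up: the neighbourhoods $W_i$ should be chosen small enough that $f^n|_{W_i}$ is a contraction towards $a_i$, so that a single iterate landing in $W_i$ already forces $f^{nj}(q)\to a_i$; this is a mild strengthening of the forward-invariance condition $f(\overline{W_i})\subset W_{i+1}$ you impose in part (a).
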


Let us review the notion of the basin of attraction of an attracting cycle.  It is not quite as straightforward as the special case of an attracting fixed point.  Consider an attracting cycle $A$ of $f$ of length $m$.  Every point of $A$ is an attracting fixed point of $f^m$.  There are at least four ways to define the basin of attraction of $A$.

Say $x\in B_1$ if $(f^{mn}(x))$ converges to a point in $A$ as $n\to\infty$.  Say $x\in B_1'$ if $(f^{mn})$ converges uniformly to a point in $A$ on a neighbourhood of $x$ as $n\to\infty$.  Then $B_1=B_1'$ is the union of the basins of attraction of the points of $A$ viewed as fixed points of $f^m$.  Say $x\in B_2$ if $(f^n(x))$ converges to $A$ as $n\to\infty$, meaning that for every neighbourhood $V$ of $A$, $f^n(x)\in V$ for all sufficiently large $n$.  Finally say $x\in B_2'$ if $(f^n)$ converges uniformly to $A$ on a neighbourhood $U$ of $x$ as $n\to\infty$, meaning that for every neighbourhood $V$ of $A$, $f^n(U)\subset V$ for all sufficiently large $n$.  Clearly, $B_2'\subset B_2\subset B_1$.

It is now a basic observation that $f^n\to A$ uniformly on a neighbourhood of $A$.  In other words, $A\subset B_2'$.  It follows that $B_1\subset B_2'$, so $B_1=B_1'=B_2=B_2'$.  Thus there is a single natural notion of the basin $B$ of $A$.  The basin is open and completely invariant, that is, $f^{-1}(B)=B$.  Also, the basin is a subset of the Fatou set $F_f$.  Indeed, if $x\in B$, given any sequence $(f^{n_k})$ of iterates of $f$, there are infinitely many $k$ for which $n_k$ lies in the same congruence class modulo $m$, so we can extract a subsequence of $(f^{n_k})$ that converges uniformly in a neighbourhood of $x$ to a point in the cycle. 

\begin{proof}
(a)  Another basic observation is that there is a compact neighbourhood $K$ of $A$ such that $f(K)\subset K^\circ$ (in fact an arbitrarily small one, but this is not needed here).  Take $p\in B$ and find a compact neighbourhood $L$ of $p$ with $f^n(L)\subset K^\circ$ for some $n\geq 0$.  Then, for all $g\in\End\,X$ sufficiently close to $f$, $g(K)\subset K^\circ$ and $g^n(L)\subset K^\circ$, so $g^j(L)\subset K$ for all $j\geq n$.  It follows that $p\in\rne(f)$.

(b)  Let us prove that if a Fatou component $U$ intersects $B$, then  $U\subset B$.  Take $p\in U\cap B$.  There is a neighbourhood $V$ of $p$ such that $f^{mn}$ converges uniformly on $V$ to a point $q$ in the cycle as $n\to\infty$.  Since $U$ is a Fatou component, and no subsequence of $(f^{mn})$ converges to infinity locally uniformly on $U$, every subsequence of $(f^{mn})$ has a subsequence that converges locally uniformly on $U$ to a holomorphic map $g:U\to X$.  Since $g$ is constantly equal to $q$ on $V$, it is constantly equal to $q$ on all of $U$.  Hence, $f^{mn}$ converges to $q$ locally uniformly on $U$.

(c)  Let $(x_n)$ be a sequence in $\att(f)$ converging to a point $y$ in $F_f$.  The Fatou component containing $y$ can contain at most one point of $\att(f)$.  Hence the points $x_n$ are eventually equal to $y$.
\end{proof}

\begin{proposition}   \label{p:rne-equals-Fatou}
For a generic endomorphism $f$ of an Oka-Stein manifold $X$, 
\[ F_f=\rne(f). \]
Moreover, $F_f$ is the set of points for which there is a neighbourhood $V$ and a compact subset $K$ of $X$ with $f^n(V)\subset K$ for all $n\geq 1$.
\end{proposition}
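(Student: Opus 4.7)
The plan is to combine Theorem \ref{t:key-theorem} with the elementary observation that every repelling periodic point lies in $J_f$. The inclusion $\rne(f)\subset F_f$ is automatic for any $f\in\End\,X$: if $U$, $V$, $K$ witness that $p\in\rne(f)$, then in particular $f^j(U)\subset K$ for all $j\geq 0$, and since $X$ is Stein, Montel's theorem yields normality of the iterates of $f$ on $U$, so $p\in F_f$. This is already noted in the bulleted definition of $\rne(f)$.

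For the reverse inclusion, I would suppose $p\in F_f$ and $p\notin\rne(f)$ and derive a contradiction. By Theorem \ref{t:key-theorem}, $p\in\overline{\rep(f)}$ for generic $f$. The claim is that $\rep(f)\subset J_f$; since $J_f$ is closed, this forces $p\in J_f$, contradicting $p\in F_f$. To verify the claim, let $q$ be a repelling periodic point of period $n$. The subsequence $(f^{nk})_{k\geq 1}$ fixes $q$, so it cannot converge locally uniformly to the point at infinity on any neighbourhood of $q$. On the other hand, $D_q f^{nk}=(D_q f^n)^k$ has operator norm tending to infinity since $D_q f^n$ has an eigenvalue of modulus greater than $1$; by the Cauchy estimates, no subsequence of $(f^{nk})$ can converge locally uniformly to a holomorphic map either. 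Hence $q\in J_f$, which establishes the claim and therefore the equality $F_f=\rne(f)$.

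For the \lq\lq moreover\rq\rq\ statement, let $N$ denote the set of $p\in X$ admitting a neighbourhood $V$ and a compact $K\subset X$ with $f^n(V)\subset K$ for all $n\geq 1$. Directly from the definitions, $\rne(f)\subset N$, and $N\subset F_f$ by Montel's theorem (same argument as in the first paragraph). Combined with the already-established equality $F_f=\rne(f)$, this forces $N=F_f=\rne(f)$.

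There is no substantive obstacle here: all the heavy lifting sits in Theorem \ref{t:key-theorem}. The one point that deserves care is the verification that $\rep(f)\subset J_f$, where one must rule out both holomorphic and point-at-infinity limits of subsequences, since the Fatou set's definition admits both types of limits; the fixedness of $q$ kills the second possibility and the blow-up of derivatives kills the first.
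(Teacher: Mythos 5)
Your proof is correct, and it takes a genuinely different route from the paper's at the crucial step. To show $F_f\subset\rne(f)$, the paper uses the part of Theorem~\ref{t:key-theorem} that places $X\setminus\rne(f)$ inside $\overline{\att(f)}$: if $p\in F_f\setminus\rne(f)$, then there are attracting periodic points arbitrarily close to $p$, hence in its Fatou component $U$; by Lemma~\ref{l:basins}(b), $U$ must then be a component of a basin of attraction, and by Lemma~\ref{l:basins}(a), such a component lies in $\rne(f)$, contradicting $p\notin\rne(f)$. You instead use the part of Theorem~\ref{t:key-theorem} that gives $X\setminus\rne(f)\subset\overline{\rep(f)}$, together with the elementary fact that $\rep(f)\subset J_f$ (proved exactly as you do, by combining the invariance of the repelling cycle, which kills limits at infinity, with the blow-up of $D_q f^{kn}=(D_q f^n)^k$, which kills holomorphic limits). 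Your route bypasses Lemma~\ref{l:basins} entirely and is arguably shorter; the paper's route through attracting points is the one that sets up the later statements about $\att(f)$, $J_f\cup\att(f)$, and basins (Proposition~\ref{p:big-proposition} and Proposition~\ref{p:Julia-images}), so the paper's choice is more about cohesion with what follows than about economy here. Your argument for $\rep(f)\subset J_f$ is sound and, importantly, does not appeal to Proposition~\ref{p:F-and-J-of-iterates}(a) (which would be circular, as that proposition is derived from the one you are proving); it is an independent, elementary verification. The treatment of the ``moreover'' clause matches the paper's sandwich argument $\rne(f)\subset E\subset F_f$.
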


\begin{proof}
Take $p\in F_f\setminus\rne(f)$ and let $U$ be the component of $F_f$ containing $p$.  By Lemma \ref{l:basins}(a), $U$ is not a component of a basin of attraction; yet, by Theorem \ref{t:key-theorem}, there are attracting periodic points arbitrarily close to $p$ and hence in $U$, which is absurd by Lemma \ref{l:basins}(b).

Let $E$ be the set of points for which there is a neighbourhood $V$ and a compact subset $K$ of $X$ with $f^n(V)\subset K$ for all $n\geq 1$.  Then $\rne(f)\subset E\subset F_f$ and the second claim follows.
\end{proof}

The following lemma is Forn\ae ss and Sibony's \cite[Proposition 2.5]{FS1998}.  They proved it for affine space, but the proof easily extends to Stein manifolds.

\begin{lemma}   \label{l:Fatou-component}
Let $f$ be an endomorphism of a Stein manifold and let $U$ be a Fatou component of $f$.  If $(f^n)$ is locally bounded on $U$, then $U$ is Runge (in particular Stein).  If, moreover, $f$ is non-degenerate, then $f(U)\subset F_f$.
\end{lemma}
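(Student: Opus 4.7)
The plan is to prove the two assertions separately, using $\mathcal O(X)$-convex hulls in $X$ as the main technical tool.

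For the Runge assertion, I would fix a connected compact $K \subset U$ and aim to show that the $\mathcal O(X)$-convex hull $\widehat K$ of $K$ in $X$ is contained in $U$. Local boundedness of $(f^n)$ on $U$ produces a compact $K' \subset X$ with $f^n(K) \subset K'$ for every $n \geq 0$; after replacing $K'$ by its $\mathcal O(X)$-convex hull, the standard hull inequality
\[ \sup_{\widehat K}\lvert h\circ f^n\rvert \leq \sup_K\lvert h\circ f^n\rvert \leq \sup_{K'}\lvert h\rvert \]
for every $h \in \mathcal O(X)$, applied in particular to the coordinate functions coming from a Stein embedding $X \hookrightarrow \C^N$, yields $f^n(\widehat K) \subset K'$ for every $n$. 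The same inequality, applied to differences $h \circ f^{n_k} - h \circ f^{n_\ell}$, transfers locally uniform convergence of a subsequence $(f^{n_k})$ on $U$ to uniform convergence on $\widehat K$. To conclude, I would exploit that $\widehat K$ is $\mathcal O(X)$-convex and so admits a basis of Stein open neighborhoods in $X$, and argue that on a sufficiently small such neighborhood $(f^n)$ remains uniformly bounded, giving $\widehat K \subset F_f$. Since $\widehat K$ is connected---the $\mathcal O(X)$-convex hull of a connected compact set in a Stein manifold is connected, by an $\mathcal O(X)$-separation argument---and meets the Fatou component $U$ at $K$, one obtains $\widehat K \subset U$, and $U$ is Runge.

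For the forward-invariance assertion, let $p = f(q)$ with $q \in U$. If $f$ has maximal rank at $q$, then $f$ is locally biholomorphic there, so a small open $V \subset U$ maps biholomorphically onto an open neighborhood $W$ of $p$ in $X$, and the identity $f^n|_W = f^{n+1}|_V \circ (f|_V)^{-1}$ makes $(f^n|_W)$ a normal family, placing $p \in F_f$. For general $q$ (possibly a critical point of $f$ in $U$), I would combine Part~1 with non-degeneracy: choose a compact $\mathcal O(X)$-convex neighborhood $\Delta$ of $q$ in $U$ (possible since $U$ is Stein by Part~1), so that $(f^n)$ is uniformly bounded on the compact $f(\Delta) \ni p$. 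By non-degeneracy, the set of maximal-rank points of $f$ is open and dense in $X$, so its image under $f$ is a dense open subset of $f(\Delta)$ on which iterates are already normal by the previous case. Applying the strategy of Part~1 to the compact $f(\Delta)$, now starting from iterates that are already normal on a dense subset, would produce $\widehat{f(\Delta)} \subset F_f$, and hence $p \in F_f$.

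The main obstacle is the Part~1 step of passing from pointwise orbit-boundedness on $\widehat K$ to uniform boundedness of iterates on an actual open neighborhood of $\widehat K$ in $X$. This is what forces the use of the Stein-neighborhood-basis machinery of $\mathcal O(X)$-convexity, and it is also where non-degeneracy re-enters in Part~2: it is precisely what supplies the dense open subset of $f(\Delta)$ on which iterates are already normal, so that the same strategy can be launched in the relative setting.
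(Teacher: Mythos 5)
The paper does not prove this lemma; it cites Forn\ae ss and Sibony \cite{FS1998} and asserts that their proof for $\C^k$ extends to Stein manifolds. So there is no internal proof to compare against, and I evaluate your proposal on its own terms.

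\textbf{The Runge assertion.} Your framework (hull inequality, bounded orbits on $\widehat K$, transfer of uniform convergence via differences) is the right one, but the step you flag as the ``main obstacle'' is a genuine gap, and the Stein-neighborhood-basis remark does not by itself close it. Knowing that every iterate $f^n$ maps $\widehat K$ into a fixed compact $K'$ gives no control whatsoever on $f^n$ on \emph{any} open neighborhood of $\widehat K$, however small, Stein, and Runge; the iterates could a priori be wildly unbounded just off $\widehat K$. The standard repair is an enlargement argument: choose a compact connected $K_1\subset U$ with $K\subset K_1^\circ$, apply the hull inequality to $K_1$ to get $f^n(\widehat{K_1})\subset K_1'$ compact, and then show $\widehat K\subset\widehat{K_1}^\circ$. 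In $\C^N$ the last step is a translation trick: if $K+\overline B_\delta\subset K_1$ and $z_0\in\widehat K$, then for $|v|\le\delta$ and any polynomial $p$ one has $|p(z_0+v)|=|p(\cdot+v)(z_0)|\le\sup_K|p(\cdot+v)|\le\sup_{K_1}|p|$, so $z_0+\overline B_\delta\subset\widehat{K_1}$. For a Stein $X$ one carries this out in an embedding $X\hookrightarrow\C^N$ via a holomorphic tubular retraction, using that the $\mathcal O(X)$-hull of a compact in $X$ equals the polynomial hull in $\C^N$ intersected with $X$. Once $\widehat K\subset\widehat{K_1}^\circ$ is in hand, Montel on the open set $\widehat{K_1}^\circ$ gives $\widehat K\subset F_f$, and your connectedness argument finishes: $\widehat K$ is a connected subset of $F_f$ meeting the component $U$, hence $\widehat K\subset U$.

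\textbf{The forward-invariance assertion.} The maximal-rank case is fine. But for a critical point $q\in U$ your plan to ``apply the strategy of Part 1 to the compact $f(\Delta)$'' is circular: that strategy requires the iterates to be locally bounded on a \emph{compact neighborhood} of the compact set in question, and for $f(\Delta)$ this presupposes precisely the conclusion $f(\Delta)\subset F_f$ you are trying to prove. Knowing only that iterates are normal on a dense subset of $f(\Delta)$ is not a substitute. Density of the maximal-rank locus $\Sigma$ yields $f(\Delta^\circ\cap\Sigma)\subset F_f$ and thus $p=f(q)\in\overline{F_f}$, but $F_f$ is open and this does not place $p$ in it. Moreover, the intermediate object you implicitly rely on, the hull $\widehat{f(\overline B)}$ for a small ball $B\ni q$, need not contain any neighborhood of $p$: for $f(z,w)=(z,zw)$ on $\C^2$ and $q=(0,0)$, the set $f(\overline B)=\{(z,u):|z|\le r,\ |u|\le|z|\sqrt{r^2-|z|^2}\}$ is a complete Reinhardt set whose logarithmic image is already convex, hence is its own polynomial hull, and it contains no ball around the origin. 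So the critical-point case requires a genuinely different argument, not a relativization of Part 1.
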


\begin{proposition}   \label{p:invariance}
The following hold for a generic endomorphism $f$ of an Oka-Stein manifold $X$.
\begin{enumerate}
\item[(a)]  The Fatou set $F_f$ is forward invariant and the components of $F_f$ are Runge.
\item[(b)]  Both $F_f$ and $J_f$ are completely invariant.
\end{enumerate}
\end{proposition}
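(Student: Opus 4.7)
The plan is to work in the intersection $\mathscr R$ of the residual subsets of $\End\,X$ given by Propositions \ref{p:rne-equals-Fatou} and \ref{p:max-rank-prop}, which is itself residual. For any $f \in \mathscr R$ we will have both $F_f = \rne(f)$ and $f$ non-degenerate, and these two facts drive the entire argument.

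For part (a), I would first observe that by the very definition of $\rne(f)$, the iterates $(f^n)$ are locally uniformly bounded on $\rne(f)$, hence on $F_f$. Consequently, on every Fatou component $U$ the family $(f^n)$ is locally bounded, and Lemma \ref{l:Fatou-component} yields at once that $U$ is Runge (in particular Stein). Invoking the second conclusion of the same lemma, which applies because $f$ is non-degenerate, gives $f(U) \subset F_f$. Since $F_f$ is the disjoint union of its components, $f(F_f) \subset F_f$, which is forward invariance.

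For part (b), recall from the introduction that $F_f$ is always backward invariant, that is, $f^{-1}(F_f) \subset F_f$. Together with the forward invariance established in (a) --- which is equivalent to $F_f \subset f^{-1}(F_f)$ --- this produces $f^{-1}(F_f) = F_f$. Taking complements in $X$ immediately delivers $f^{-1}(J_f) = J_f$, so $J_f$ is completely invariant as well.

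The real content is entirely packaged in Propositions \ref{p:rne-equals-Fatou} and \ref{p:max-rank-prop} together with Lemma \ref{l:Fatou-component}; the only potential obstacle is conceptual, namely that for an arbitrary (non-generic) endomorphism the Fatou set need not be forward invariant. This is precisely why non-degeneracy --- secured here by genericity --- is indispensable when invoking the second half of Lemma \ref{l:Fatou-component}, and it is the reason the two residual conditions need to be combined rather than used in isolation.
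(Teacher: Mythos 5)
Your proof is correct and follows essentially the same route as the paper's, combining Propositions \ref{p:rne-equals-Fatou} and \ref{p:max-rank-prop} with Lemma \ref{l:Fatou-component} for part (a) and then deducing (b) from forward and backward invariance. You merely supply the intermediate steps the paper leaves implicit.
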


\begin{proof}
(a) follows from Propositions \ref{p:max-rank-prop} and \ref{p:rne-equals-Fatou} and Lemma \ref{l:Fatou-component}.

(c) follows immediately from (a) and the easy observation that $F_f$ is backward invariant.
\end{proof}

\begin{proposition}   \label{p:F-and-J-of-iterates}
The following hold for a generic endomorphism $f$ of an Oka-Stein manifold $X$.
\begin{enumerate}
\item[(a)]  $J_f=\overline{\rep(f)}$ and, if $\dim X\geq 2$, $J_f=\overline{\sad(f)}$.
\item[(b)]  For every $m\geq 2$,
\[ J_{f^m}=J_f \qquad\textrm{and}\qquad F_{f^m}=F_f. \]
\end{enumerate}
\end{proposition}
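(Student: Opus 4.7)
For part (a), my plan is to combine Proposition \ref{p:rne-equals-Fatou}, which gives $F_f=\rne(f)$ for a generic $f$, with Theorem \ref{t:key-theorem}, which identifies $X\setminus\rne(f)$ with $\overline{\rep(f)}$ and, when $\dim X\geq 2$, with $\overline{\sad(f)}$.  Taking complements in $X$ immediately yields $J_f=\overline{\rep(f)}=\overline{\sad(f)}$, so (a) reduces to quoting already-established facts.

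For part (b), one inclusion is easy and holds for any $f$: since $(f^{mn})_{n\geq 1}$ is a subsequence of $(f^n)_{n\geq 1}$, any subsequence of the former is also a subsequence of the latter, so normality of the iterates of $f$ on a neighbourhood automatically gives normality of the iterates of $f^m$ on that same neighbourhood.  Hence $F_f\subset F_{f^m}$, i.e., $J_{f^m}\subset J_f$.

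For the reverse inclusion $J_f\subset J_{f^m}$, I would use part (a) together with the observation $\rep(f)\subset\rep(f^m)$.  If $p$ is a periodic point of $f$ of minimal period $k$, then $p$ is a periodic point of $f^m$ of minimal period $k/\gcd(k,m)$, and the derivative of the corresponding iterate of $f^m$ at $p$ is $D_pf^{\operatorname{lcm}(k,m)}=(D_pf^k)^{m/\gcd(k,m)}$, a positive power of $D_pf^k$, so all of its eigenvalues have modulus greater than $1$.  Combining this with the standard fact that a repelling periodic point always lies in the Julia set (otherwise, a locally uniformly convergent subsequence of the iterates of $f^m$ indexed by multiples of the period of $p$ would produce a holomorphic limit near $p$ with derivative at $p$ equal to a limit of powers of a matrix all of whose eigenvalues exceed $1$ in modulus, which is impossible) yields $\rep(f)\subset J_{f^m}$, hence $\overline{\rep(f)}\subset J_{f^m}$, and by part (a) $J_f\subset J_{f^m}$.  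The equality $F_{f^m}=F_f$ then follows by taking complements.  The only real obstacle is the period-and-power bookkeeping in the repelling-point argument; there is no new analytic input beyond part (a) and Theorem \ref{t:key-theorem}.
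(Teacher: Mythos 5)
Your proposal is correct and follows essentially the same route as the paper: part (a) is obtained by intersecting Theorem \ref{t:key-theorem} with Proposition \ref{p:rne-equals-Fatou}, and part (b) combines the trivial inclusion $F_f\subset F_{f^m}$ with $\rep(f)\subset J_{f^m}$. You simply spell out the period/power bookkeeping and the divergence-of-derivatives argument behind the fact that a repelling periodic point lies in the Julia set of any iterate, which the paper states without comment.
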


\begin{proof}
(a) follows from Theorem \ref{t:key-theorem} and Proposition \ref{p:rne-equals-Fatou}.

(b)  Clearly, $F_f\subset F_{f^m}$, so $J_{f^m}\subset J_f$.  Since $J_f$ is the closure of $\rep(f)$, which is contained in $J_{f^m}$, the reverse inclusions hold.
\end{proof}

From Theorem \ref{t:key-theorem}, Lemma \ref{l:basins}(c), and Proposition \ref{p:rne-equals-Fatou}, we deduce the following result.

\begin{proposition}   \label{p:big-proposition}
For a generic endomorphism $f$ of an Oka-Stein manifold $X$,
\[ \clo(f) = \Omega_f = \overline{\att(f)}=J_f\cup\att(f). \]
Also, $J_f=X\setminus\rne(f)$ has no interior.
\end{proposition}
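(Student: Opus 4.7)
The plan is to exploit the decomposition $X = \rne(f) \sqcup (X\setminus\rne(f))$, combined with the identification $J_f = X\setminus\rne(f)$ from Proposition \ref{p:rne-equals-Fatou} and the key inclusion $X\setminus\rne(f) \subset \overline{\att(f)}$ from Theorem \ref{t:key-theorem}, to read off the chain $\overline{\att(f)} = J_f \cup \att(f) = \Omega_f$ almost mechanically.  The identification of $\clo(f)$ with $\Omega_f$ is then handled separately, using the closing lemma and general density theorem from \cite{AL2020}.

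First I would unpack the decomposition.  Proposition \ref{p:rne-equals-Fatou} gives $J_f = X \setminus \rne(f)$, and Theorem \ref{t:key-theorem} says this set is contained in $\overline{\att(f)}$, so $J_f \cup \att(f) \subset \overline{\att(f)}$.  For the reverse inclusion, a limit point of $\att(f)$ lying in $F_f = \rne(f)$ belongs to $\att(f)$ itself by Lemma \ref{l:basins}(c); hence $\overline{\att(f)}\setminus\att(f) \subset J_f$, proving $\overline{\att(f)} = J_f \cup \att(f)$.  The same observation forces $J_f$ to have empty interior: every point of $J_f$ is a limit of points of $\att(f) \subset F_f$, so no open subset of $J_f$ can lie entirely outside $F_f$.

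Next I would show $\Omega_f = \overline{\att(f)}$.  The inclusion $\overline{\att(f)} \subset \Omega_f$ is immediate because attracting periodic points are non-wandering and $\Omega_f$ is closed.  Conversely, given $p \in \Omega_f$, either $p \in \rne(f)\cap\Omega_f = \att(f)$ (invoking the identity from \cite{AL2020} recalled in the introduction) or $p \in X\setminus\rne(f) \subset \overline{\att(f)}$ by Theorem \ref{t:key-theorem}.

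The equality $\clo(f) = \Omega_f$ is where I expect the main obstacle.  The inclusion $\Omega_f \subset \clo(f)$ is the closing lemma from \cite{AL2020}.  The reverse is more delicate, because a priori a point $p \in \clo(f)$ is only approximated by periodic points of endomorphisms \emph{near} $f$, not of $f$ itself, and if the periods of the approximating orbits diverge, no naive Montel or continuity argument bridges the gap.  To handle this I would use a Baire category argument: for each member $U$ of a countable basis for the topology of $X$, the set of $f \in \End\,X$ that either possesses a hyperbolic periodic point in $U$ or admits an open neighbourhood in $\End\,X$ containing no endomorphism with a periodic point in $U$ is open and, by the transversality and perturbation techniques used earlier in the section (to upgrade an arbitrary periodic point of a nearby endomorphism to a hyperbolic one), dense.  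Intersecting these sets over the basis yields a residual subset of $\End\,X$ on which $\clo(f) \subset \overline{\hyp(f)}$; combined with the general density theorem $\overline{\hyp(f)} = \Omega_f$ of \cite{AL2020}, this closes the loop.
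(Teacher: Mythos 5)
Your proposal is correct and relies on the same underlying machinery as the paper — Theorem \ref{t:key-theorem}, Proposition \ref{p:rne-equals-Fatou}, Lemma \ref{l:basins}(c), the closing lemma, and the general density theorem from \cite{AL2020} — but you organise it a little differently. The paper first establishes $\overline{\att(f)}=J_f\cup\att(f)$ (exactly as you do) and then lets the chain $\overline{\att(f)}\subset\Omega_f\subset\clo(f)=\overline{\hyp(f)}=J_f\cup\att(f)$ collapse, where $\clo(f)=\overline{\hyp(f)}$ is cited directly from the proof of \cite[Corollary 1]{AL2020} and the final equality uses $J_f=\overline{\rep(f)}$ together with the already-proved $\overline{\att(f)}=J_f\cup\att(f)$. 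You instead prove $\Omega_f=\overline{\att(f)}$ outright by splitting $X$ along $\rne(f)$ and invoking the identity $\rne(f)\cap\Omega_f=\att(f)$ from \cite{AL2020}, an identity the paper recalls in the introduction but does not use in this particular proof; and you re-derive via a Baire category argument what the paper simply cites, namely that $\hyp(f)$ is dense in $\clo(f)$ for generic $f$. Your version decouples $\Omega_f=\overline{\att(f)}$ from the perturbative content concerning $\clo(f)$, making the logical dependencies a bit more transparent, at the cost of importing one extra identity and reproducing an argument already available in \cite{AL2020}; both routes are valid.
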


\begin{proof}
Since $\att(f)$ is closed in $F_f$ and $J_f$ lies in the closure of $\att(f)$, we have $\overline{\att(f)} = J_f\cup\att(f)$.  Since also $\att(f)\cap J_f=\varnothing$, the interior of $J_f$ is empty.

Now the non-wandering set $\Omega_f$ is closed and $\att(f)\subset\Omega_f$, so $\overline{\att(f)} \subset \Omega_f$.  The closing lemma \cite[Theorem~2(c)]{AL2020} says that $\Omega_f \subset \clo(f)$.  The proof of the general density theorem \cite[Corollary~1]{AL2020} shows that $\hyp(f)$ is dense in $\clo(f)$ for a generic endomorphism $f$, so $\clo(f)=\overline{\hyp(f)}=\att(f)\cup J_f$.  Thus the four sets $\overline{\att(f)} \subset \Omega_f \subset \clo(f) =J_f\cup \att(f)$ are equal.
\end{proof}

\begin{proposition}   \label{p:more-about-Julia}
The following hold for a generic endomorphism $f$ of an Oka-Stein manifold $X$.
\begin{enumerate}
\item[(a)]  For every compact subset $K$ of $X$, $f$ has an attracting fixed point, a saddle fixed point, and a repelling fixed point outside $K$.
\item[(b)]  $J_f$ is not compact.
\item[(c)]  $J_f$ is perfect.
\end{enumerate}
\end{proposition}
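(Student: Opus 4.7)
To prove (a), I plan a standard Baire category argument. Fix a compact exhaustion $K_1\subset K_2\subset\cdots$ of $X$, and for each $n$ let $A_n$, $R_n$, $S_n$ be the sets of $f\in\End\,X$ admitting, respectively, an attracting, repelling, or saddle (if $\dim X\ge 2$) fixed point outside $K_n$. Each of these sets is open by persistence of hyperbolic periodic points. For density, given $f\in\End\,X$ and a basic neighbourhood of $f$ determined by a holomorphically convex compact set $L\supset K_n$, I would choose any $p\in X\setminus L$ and a linear endomorphism $B$ of $T_pX$ with the desired spectrum (zero for super-attracting, all eigenvalues of modulus $>1$ for repelling, and mixed moduli for saddle), construct a continuous $\tilde f:X\to X$ equal to $f$ near $L$ and equal to $x\mapsto p+B(x-p)$ in local coordinates near $p$, and apply the basic Oka property with approximation on $L$ and $1$-jet interpolation at $p$ (Remark~(d)) to deform $\tilde f$ to a holomorphic $g\in\End\,X$ that is close to $f$ on $L$ with $g(p)=p$ and $d_pg=B$. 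The intersection of these countably many open dense families is the required residual set.

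Part (b) is immediate: by Proposition~\ref{p:big-proposition}, $\rep(f)\subset X\setminus\rne(f)=J_f$, and (a) supplies repelling fixed points outside every compact, so $J_f$ cannot be compact.

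For (c), I would argue by contradiction. Suppose $p\in J_f$ is isolated in $J_f$, and pick a connected open neighbourhood $V$ of $p$ with $V\cap J_f=\{p\}$. Since $X$ has positive complex dimension, $V\setminus\{p\}$ is connected, lies in $F_f$, and hence lies in a single Fatou component $W$. By Proposition~\ref{p:big-proposition}, $p\in\overline{\att(f)}\setminus\att(f)$, so there is a sequence $a_k\to p$ with $a_k\in\att(f)$. Since a Fatou component is contained in the basin of at most one attracting cycle and contains attracting periodic points only from that cycle, $\att(f)$ is discrete in $X$; the $a_k$ are therefore eventually distinct and, cycles being finite, after extraction the $a_k$ lie in pairwise distinct cycles with pairwise disjoint basins $B_k$. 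For all large $k$, $a_k\in V\setminus\{p\}\subset W$, so the connected set $W\subset F_f$ meets each $B_k$; by Lemma~\ref{l:basins}(b), this forces $W\subset B_k$ for all such $k$, contradicting the pairwise disjointness of the $B_k$.

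The chief obstacle lies in part (a): implanting a periodic point of a specified hyperbolic type at some $p$ outside $K_n$ while disturbing $f$ on $K_n$ by less than a given amount. This is exactly what the basic Oka property with approximation and jet interpolation delivers, once the approximation set $L\supset K_n$ is taken holomorphically convex and disjoint from $p$. Parts (b) and (c) are then formal consequences of (a), Proposition~\ref{p:big-proposition}, and Lemma~\ref{l:basins}.
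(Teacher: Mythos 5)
Your proposal is correct and takes essentially the same approach as the paper: part (a) by an open-dense argument over a compact exhaustion, implanting a fixed point of prescribed $1$-jet at a point outside a holomorphically convex $L\supset K$ via the basic Oka property with approximation and jet interpolation; (b) as an immediate consequence of (a) (using $\rep(f)\subset J_f$); and (c) by contradiction, using $p\in\overline{\att(f)}\setminus\att(f)$ to produce two attracting periodic points from distinct cycles in a punctured neighbourhood $U\setminus\{p\}\subset F_f$, hence in one Fatou component, contradicting Lemma~\ref{l:basins}(b). Your version of (c) spells out the discreteness of $\att(f)$ and the passage to pairwise distinct cycles in a bit more detail than the paper's terse phrasing, but the argument is the same.
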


\begin{proof}
(a)  Let $K\subset X$ be compact.  We will show that the open set of endomorphisms of $X$ with an attracting fixed point outside $K$ is dense in $\End\,X$, and similarly for saddle fixed points and repelling fixed points.  

Let $f\in\End\,X$, let $L$ be a holomorphically convex compact subset of $X$ containing $K$, and let $p\in X\setminus L$.  Let $\phi:X\to X$ be continuous, equal to $f$ on a neighbourhood of $L$, and holomorphic on a neighbourhood of $p$ with $\phi(p)=p$ and an arbitrarily prescribed derivative at $p$.  Since $X$ is Stein and Oka, $\phi$ can be deformed to an endomorphism of $X$, arbitrarily close to $f$ on $L$, whose 1-jet at $p$ coincides with that of $\phi$.

(b) follows from (a).

(c)  Suppose that $p$ is an isolated point of $J_f$.  Let $U$ be a coordinate ball centred at $p$ such that $U\setminus\{p\}\subset F_f$.  By Proposition \ref{p:big-proposition}, $p\in\overline{\att(f)}$, so there are two points in $\att(f)\cap U$ that lie in distinct cycles.  Since $U\setminus\{p\}$ is connected, the two points must lie in the same Fatou component, which is absurd.
\end{proof}

\begin{proposition}   \label{p:chaos}
A generic endomorphism of an Oka-Stein manifold is chaotic on its Julia set.
\end{proposition}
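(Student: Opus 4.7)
The plan is to verify Devaney's three conditions for chaos of $f|_{J_f}$. Density of periodic points in $J_f$ is immediate from $J_f = \overline{\rep(f)}$ (Proposition \ref{p:F-and-J-of-iterates}(a)). Since $J_f$ is perfect and nonempty, hence infinite (Proposition \ref{p:more-about-Julia}), a theorem of Banks--Brooks--Cairns--Davis--Stacey will give sensitive dependence on initial conditions once topological transitivity has been established. So the proposition reduces to showing, generically, topological transitivity of $f$ on $J_f$, and this I will do with another Baire category argument in the spirit of those earlier in the section.

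Fix a countable basis $\{U_n\}_{n \geq 1}$ for the topology of $X$ and, for each pair of indices $(n, m)$, consider
\[
R_n = \{f \in \End\,X : U_n \cap \rep(f) \neq \varnothing\}, \quad
\mathcal{T}_{n,m} = \{f : \exists\, y \in U_n,\ w \in U_m \cap \rep(f),\ k \geq 0,\ f^k(y) = w\}.
\]
Both sets are open, by persistence of hyperbolic periodic points together with local inverse branches of iterates at regular points (cf.\ Lemma \ref{l:max-rank-lemma}, Proposition \ref{p:max-rank-prop}). I will show that $\mathcal{T}_{n,m}$ is dense in $R_n \cap R_m$. Given $f \in R_n \cap R_m$, pick repelling periodic points $z \in U_n$ and $w \in U_m$ with disjoint orbits, and a compact-open neighbourhood $W(K, \varepsilon)$ of $f$ with $K$ holomorphically convex and containing the orbits of $z$ and $w$. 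Since $z \in J_f = X \setminus \rne(f)$ (Propositions \ref{p:rne-equals-Fatou} and \ref{p:big-proposition}), by the very definition of robust non-expellingness there exist $g_1 \in W(K, \varepsilon/2)$ and $y \in U_n$ near $z$ with $p = g_1^j(y) \notin K$ for some $j$. Apply the basic Oka property with approximation on $K$ and jet interpolation on the finite set consisting of the orbits of $z$ and $w$, the points $y, g_1(y), \ldots, g_1^{j-1}(y)$, and $p$, to produce $g \in W(K, \varepsilon)$ whose $1$-jet agrees with that of $f$ on the orbits of $z$ and $w$ (so they remain repelling periodic points of $g$), whose $1$-jet agrees with that of $g_1$ at $y, g_1(y), \ldots, g_1^{j-1}(y)$ (forcing $g^i(y) = g_1^i(y)$ for $0 \leq i \leq j$), and with $g(p) = w$. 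The prescribed jump in value at $p$ is consistent with the approximation on $K$ because $p \notin K$. Then $g^{j+1}(y) = g(p) = w \in U_m \cap \rep(g)$, so $g \in \mathcal{T}_{n,m}$.

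Combined with the existing residual condition $J_f = \overline{\rep(f)}$, a standard Baire argument yields a residual set of $f$ with the property that $f \in R_n \cap R_m$ implies $f \in \mathcal{T}_{n,m}$ for all $(n, m)$. For such $f$ and any relatively open nonempty $U', V' \subset J_f$, write $U' = U \cap J_f$ and $V' = V \cap J_f$, pick basis elements $U_n \subset U, U_m \subset V$ meeting $\rep(f)$ (possible since $\rep(f)$ is dense in $J_f$), and extract $y \in U_n$ with $f^k(y) = w \in U_m \cap \rep(f)$; complete invariance of $J_f$ then places $y$ in $U' \cap J_f$ and $f^k(y) = w$ in $V' \cap J_f$, establishing topological transitivity. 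The main obstacle is the density step, where the basic Oka property must reconcile three sets of jet constraints with approximation on a large compact set $K$: fixing the two existing periodic orbits (inside $K$), steering the transit orbit of $y$ (which starts inside $K$ but leaves at $p$), and forcing the single value $g(p) = w$ (outside $K$). This Oka-theoretic bookkeeping is exactly what underwrites the closing lemma Theorem \ref{t:key-closing-lemma}, from which the perturbation here borrows its structure.
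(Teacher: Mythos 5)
Your strategy is genuinely different from the paper's: you verify Devaney's three conditions (dense periodic points via $J_f=\overline{\rep(f)}$, then transitivity by a Baire argument, then sensitivity via Banks--Brooks--Cairns--Davis--Stacey), whereas the paper verifies Touhey's characterisation of chaos, which packages dense periodic points and transitivity into a single statement: for every two basic open sets meeting $J_f$, there is a repelling cycle visiting both. That is, the paper's $S_{m,n}$ is the set of endomorphisms with a repelling cycle through $U_m$ and $U_n$, and the density step creates a brand-new repelling cycle by joining two escaping orbits (one starting in $U_m$, one in $U_n$), so there is nothing to preserve. Your version routes an escaping orbit into an already existing repelling cycle $w$, which is workable but forces you to keep $w$ (and $z$) alive through the two-stage perturbation $f\rightsquigarrow g_1\rightsquigarrow g$, and this is where the write-up slips.

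The concrete problem is in the Oka step. The basic Oka property with approximation and jet interpolation requires the starting map $\phi$ to be holomorphic on a \emph{neighbourhood of $K$}, hence to be a single holomorphic map there. You cannot ask $\phi$ to carry the $1$-jets of $f$ at the orbits of $z$ and $w$ and simultaneously the $1$-jets of $g_1$ at $y,g_1(y),\ldots,g_1^{j-1}(y)$, since all of these points lie in $K$ and $f\neq g_1$ in general. The fix is to take $\phi=g_1$ on a neighbourhood of $K$ (modified near the exterior point $p$), and to interpolate the $1$-jets of $g_1$ along the repelling cycles $z',w'$ of $g_1$ itself (which exist, are repelling, and lie in $U_n,U_m$ by persistence, provided $\varepsilon$ is small), rather than the jets of $f$ along the cycles of $f$. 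One then steers $p$ to $w'$ rather than to $w$. A second, smaller issue: openness of $\mathcal T_{n,m}$ as you define it is not automatic, because the chosen $y$ may be a critical point of $f^k$; you should either build a maximal-rank condition on $D_yf^k$ into the definition of $\mathcal T_{n,m}$, or work from the start inside the open dense set $\Lambda$ of non-degenerate endomorphisms and invoke Lemma \ref{l:max-rank-lemma} to move $y$ to a regular point --- exactly as the paper does by intersecting with $\Lambda$ at the outset. With these two repairs your route goes through, but it is longer than the paper's: Touhey's criterion lets the paper manufacture a fresh repelling cycle and be done, while the Devaney route must separately establish transitivity, protect pre-existing cycles, and appeal to an external theorem for sensitive dependence.
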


The usual definition of chaos is that periodic points are dense and there is a dense orbit.  Instead, we verify Touhey's characterisation of chaos \cite{Touhey1997}, which says that for every two nonempty open subsets, there is a cycle that visits both of them.

\begin{proof}
Let $\Lambda\subset \End\,X$ be the dense open set of non-degenerate endomorphisms.  Take a countable basis $\{U_n:n\geq 1\}$ for the topology of $X$.  Let $S_{m,n}$ be the open subset of $\End\,X$ of endomorphisms with a repelling cycle through $U_m$ and $U_n$.  Then 
\[ G=\Lambda\cap \bigcap\limits_{m,n} \End\,X \setminus \partial S_{m,n}\] 
is residual.  We claim that each $f\in G$ is chaotic on $J_f$.

Take $U_m$ and $U_n$ both intersecting $J_f$.  By definition of $G$, we have $f\in S_{m,n}$ or $f\notin \overline S_{m,n}$.  In the former case, $f$ has a repelling cycle through $U_m$ and $U_n$.  Such a cycle lies in $J_f$ and we are done.  We rule out the latter case by showing that $f$ can be approximated by endomorphisms in $S_{m,n}$.  

Let $K$ be a holomorphically convex compact subset of $X$.  There are $p\in U_m$ and $q\in U_n$ such that $f^j(p)$ and $f^k(q)$ lie outside $K$ for some $j,k\geq 1$.  Since $f\in \Lambda$, by Lemma \ref{l:max-rank-lemma}, we can assume that $f^j$ has maximal rank at $p$ and that $f^k$ has maximal rank at $q$.  Let $\phi:X\to X$ be continuous, equal to $f$ on a neighbourhood of $K$ and at the points $p,f(p),\ldots,f^{j-1}(p)$ and $q,f(q),\ldots,f^{k-1}(q)$, holomorphic on neighbourhoods of $f^j(p)$ and $f^k(q)$, with $\phi(f^j(p))=q$ and $\phi(f^k(q))=p$ and an arbitrarily prescribed derivative of maximal rank at those points.  Deform $\phi$ to an endomorphism $g$ of $X$, arbitrarily close to $f$ on $K$, equal to $f$ at the points $p,f(p),\ldots,f^{j-1}(p)$ and $q,f(q),\ldots,f^{k-1}(q)$, and with the prescribed 1-jet at $f^j(p)$, $f^k(q)$.  By taking the derivatives $\phi'(f^j(p))$ and $\phi'(f^k(q))$ to be sufficiently large, we get $g\in S_{m,n}$.
\end{proof}

\begin{proposition}   \label{p:bounded-orbits}
The following hold for a generic endomorphism $f$ of an Oka-Stein manifold $X$.
\begin{enumerate}
\item[(a)]  The set of points in $J_f$ whose $f$-orbit is relatively compact is dense in $J_f$.
\item[(b)]  The set of points in $J_f$ whose $f$-orbit is not relatively compact is dense in $J_f$.
\item[(c)]  $J_f$ is the boundary of the set of points in $X$ whose $f$-orbit is relatively compact.
\end{enumerate}
\end{proposition}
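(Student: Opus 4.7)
The proof splits cleanly into three parts, with (a) immediate, (b) the substantive point, and (c) a short combination.  Part (a) is a direct consequence of Proposition \ref{p:F-and-J-of-iterates}(a): since generically $J_f=\overline{\rep(f)}$, and every repelling periodic point has a finite (hence relatively compact) orbit, $\rep(f)$ is already a dense subset of $J_f$ consisting of points with relatively compact $f$-orbit.

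For part (b), my plan is to produce points of $J_f$ whose forward orbit is dense in $J_f$; any such orbit is automatically non-relatively-compact, because $J_f$ is not compact.  The ingredients are that $J_f$ is closed in the Polish manifold $X$ and hence is itself a Polish (in particular Baire) space; that $J_f$ is perfect by Proposition \ref{p:more-about-Julia}(c); that $f$ restricts to a continuous self-map of the forward invariant set $J_f$ by Proposition \ref{p:invariance}; and crucially that $f\vert_{J_f}$ is topologically transitive.  The transitivity will follow from Proposition \ref{p:chaos}: given relatively open sets $V\cap J_f$ and $W\cap J_f$ in $J_f$, Touhey's characterisation supplies a repelling cycle $C\subset\rep(f)\subset J_f$ meeting both $V$ and $W$, and some iterate of $f$ carries a cycle point in $V$ to one in $W$.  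The standard Birkhoff-type Baire argument then applies: for a countable base $\{V_n\}$ of $J_f$, each $\bigcup_{k\geq 0}(f\vert_{J_f})^{-k}(V_n)$ is open by continuity and dense by transitivity, so the intersection over $n$ is a residual, hence dense, set of points whose forward $f$-orbit is dense in $J_f$.  Finally, if the orbit of such a point were contained in a compact set $K\subset X$, its closure $J_f$ would lie in $K$, contradicting non-compactness of $J_f$ from Proposition \ref{p:more-about-Julia}(b); so these orbits are non-relatively-compact, as required.

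For part (c), set $R=\{p\in X : \text{the $f$-orbit of $p$ is relatively compact in $X$}\}$.  From $F_f=\rne(f)$ (Proposition \ref{p:rne-equals-Fatou}) and the defining property of $\rne(f)$ we get $F_f\subset R$, hence $F_f\subset R^\circ$.  By part (b), every relatively open subset of $J_f$ contains a point with non-relatively-compact orbit, so no point of $J_f$ is interior to $R$; combined with $J_f$ having empty interior in $X$ (Proposition \ref{p:big-proposition}), this forces $R^\circ=F_f$.  By part (a), $R\cap J_f$ is dense in $J_f$, so $\overline R\supset F_f\cup J_f=X$, and therefore $\partial R=\overline R\setminus R^\circ=X\setminus F_f=J_f$.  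The only point requiring any care is the topological transitivity of $f\vert_{J_f}$, but with Proposition \ref{p:chaos} in hand this reduces to the cycle-extraction above, so I do not anticipate a serious technical obstacle.
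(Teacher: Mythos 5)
Your proposal is correct and follows essentially the same route as the paper: part (a) via density of periodic points in $J_f$ (you invoke $J_f=\overline{\rep(f)}$ from Proposition~\ref{p:F-and-J-of-iterates}(a), the paper invokes density of finite orbits from Proposition~\ref{p:chaos}, which is the same fact), part (b) from chaos plus non-compactness of $J_f$, and part (c) by combining (a), (b), and $F_f=\rne(f)$. The only difference is that you spell out the Birkhoff/Baire argument deducing a residual set of dense orbits from Touhey-chaos, a step the paper states without elaboration; incidentally, the invocation of Proposition~\ref{p:big-proposition} in your part (c) is unnecessary, since ``no point of $J_f$ is interior to $R$'' already gives $R^\circ\subset F_f$ directly.
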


\begin{proof}
(a)  By Proposition \ref{p:chaos}, points with finite orbit are dense in $J_f$.

(b)  By Proposition \ref{p:chaos}, points with dense orbit are dense in $J_f$, so by Proposition \ref{p:more-about-Julia}(b), points whose orbit is not relatively compact are dense in $J_f$.

(c)  follows from (b) and Proposition \ref{p:rne-equals-Fatou}.
\end{proof}

\begin{proposition}   \label{p:Julia-images}
The following hold for a generic endomorphism $f$ of an Oka-Stein manifold $X$.
\begin{enumerate}
\item[(a)]  If $U$ is a neighbourhood of a point in $J_f$, then $\bigcup\limits_{n\geq 0}f^n(U)$ is dense in $X$.
\item[(b)]  $J_f$ is the boundary of each basin of attraction.
\end{enumerate}
\end{proposition}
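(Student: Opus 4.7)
The plan is to prove (a) by a residual-set argument modeled on the proof of Proposition \ref{p:chaos}, and then deduce (b) from (a) using the complete invariance of the basin.

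For (a), I fix a countable basis $\{U_n : n \geq 1\}$ for the topology of $X$ and define, for each pair $(m,k)$,
\[ S_{m,k} = \{f \in \End\, X : f^n(U_m) \cap U_k \neq \varnothing \text{ for some } n \geq 0\}, \]
an open subset of $\End\, X$. The set $\bigcap_{m,k}(\End\, X \setminus \partial S_{m,k})$ is residual; intersecting it with the residual set provided by Proposition \ref{p:rne-equals-Fatou} yields a residual set $G$ such that any $f \in G$ satisfies $F_f = \rne(f)$ and, for each $(m,k)$, either $f \in S_{m,k}$ or $f \notin \overline{S_{m,k}}$. The goal is to rule out the second alternative whenever $U_m \cap J_f \neq \varnothing$, so that $\bigcup_n f^n(U_m)$ meets every basis element $U_k$.

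The central step is to extend the closing lemma of Theorem \ref{t:key-closing-lemma} to show that, if $U_m \cap J_f \neq \varnothing$, then $f \in \overline{S_{m,k}}$ for every $k$. I choose $p \in U_m \cap J_f$; since $p \notin \rne(f)$, for any holomorphically convex compact $L \subset X$ and any neighborhood $W$ of $f$, there exist $g \in W$ and $q \in U_m$ together with $m' \geq 1$ such that $g^j(q) \in L$ for $0 \leq j < m'$ and $g^{m'}(q) \notin L$. Fix any $z \in U_k$. I construct a continuous $\phi : X \to X$ equal to $g$ on a neighborhood of $L$ and holomorphic on a small disjoint neighborhood of $g^{m'}(q)$ with $\phi(g^{m'}(q)) = z$, and then apply the basic Oka property with approximation on $L$ and jet interpolation at the finite set $\{q, g(q), \ldots, g^{m'-1}(q), g^{m'}(q)\}$ to deform $\phi$ to a holomorphic $h : X \to X$ arbitrarily close to $g$ (hence to $f$) on $L$, having the same $1$-jet as $g$ at $q, g(q), \ldots, g^{m'-1}(q)$, and satisfying $h(g^{m'}(q)) = z$. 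Then $h^j(q) = g^j(q)$ for $0 \leq j \leq m'$ and $h^{m'+1}(q) = z \in U_k$, so $h \in S_{m,k}$; arbitrariness of the approximation gives $f \in \overline{S_{m,k}}$. Combined with $f \notin \partial S_{m,k}$, this forces $f \in S_{m,k}$. Since this holds for every $k$ whenever $U_m \cap J_f \neq \varnothing$, $\bigcup_n f^n(U_m)$ is dense in $X$; any neighborhood $U$ of a point in $J_f$ contains some such $U_m$, yielding (a).

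Part (b) then follows from (a). Let $B$ be the basin of an attracting cycle. If $q \in \partial B$ were in $F_f$, its Fatou component $F$ would meet $B$ and hence coincide with a Fatou component of $B$ by Lemma \ref{l:basins}(b), contradicting $q \notin B$; hence $\partial B \subset J_f$. Conversely, for $p \in J_f$ and any neighborhood $U$ of $p$, (a) gives some $n$ with $f^n(U) \cap B \neq \varnothing$, so $U \cap f^{-n}(B) \neq \varnothing$; using complete invariance $f^{-1}(B) = B$ we get $U \cap B \neq \varnothing$ and thus $p \in \overline{B} \setminus B = \partial B$. So $J_f = \partial B$. The main obstacle is the extended closing-lemma step: one needs not merely that the orbit leaves $L$, as in Theorem \ref{t:key-closing-lemma}, but that it lands in the prescribed open set $U_k$. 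The Oka property with jet interpolation delivers exactly this, since the escaped iterate $g^{m'}(q) \notin L$ can be sent freely to any $z \in U_k$ without disturbing the approximation on $L$, while the orbit structure inside $L$ is pinned down by $1$-jet matching; no super-attracting condition on $q$ is required here.
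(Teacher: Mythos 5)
Your proposal is correct and follows essentially the same route as the paper: the same residual set built from $\bigcap_{m,k}(\End\,X\setminus\partial S_{m,k})$, and an Oka deformation that redirects an escaping iterate of a point near $U_m$ into $U_k$ while freezing the earlier orbit points, together with the same deduction of (b) from (a) using complete invariance of the basin. The only cosmetic differences are that you first perturb $f$ to $g$ via the definition of $\rne(f)$ where the paper directly invokes (for generic $f$) a point of $U_m$ whose $f$-orbit leaves the compact set, and that you match $1$-jets where value interpolation suffices; both are harmless.
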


\begin{proof}
(a)  Let $\{U_n:n\geq 1\}$ be a countable basis for the topology of $X$.  Let $S_{m,n}$ be the open set of all $f\in\End\,X$ such that $f^k(U_m)$ intersects $U_n$ for some $k\geq 0$.  Let $G=\bigcap \End\,X\setminus \partial S_{m,n}$.  Then $G$ is a residual subset of $\End\,X$.  We prove (a) by showing that for every $f\in G$ and every $m,n\geq 1$ such that $U_m$ intersects $J_f$, there is $k\geq 0$ such that $f^k(U_m)$ intersects $U_n$.  If $f\in S_{m,n}$, this is obvious.  By the definition of $G$, if $f\notin S_{m,n}$, then $f\notin \overline S_{m,n}$, but this can be ruled out as follows.

Let $K$ be a holomorphically convex compact subset of $X$.  Since $U_m$ intersects $J_f$, there is $p\in U_m$ such that $f^k(p)\notin K$ for some $k\geq 1$.  Let $\phi:X\to X$ be continuous, equal to $f$ on a neighbourhood of $K$ and at the points $p,f(p),\ldots,f^{k-1}(p)$, and with $\phi(f^k(p))\in U_n$.  Deform $\phi$ to an endomorphism $g$ of $X$, arbitrarily close to $f$ on $K$ and equal to $\phi$ at the points $p,f(p),\ldots,f^k(p)$.  Then $g\in S_{m,n}$ approximates $f$ as well as desired.

(b)  Let $B$ be the basin of attraction of an attracting cycle of $f$ and let $U$ be a neighbourhood of a point in $J_f$.  By (a), $f^n(U)$ intersects $B$ for some $n\geq 0$, so $U$ itself intersects $B$.  Hence, $J_f\subset\partial B$.  The opposite inclusion is evident.
\end{proof}

\begin{proposition}  \label{p:escaping}
Let $f$ be an endomorphism of an Oka-Stein manifold $X$ and let $p\in X\setminus \rne(f)$.  Take a neighbourhood $U$ of $p$ in $X$ and $V$ of $f$ in $\End\, X$.  Then there is an endomorphism in $V$ with an escaping point in $U$. 
\end{proposition}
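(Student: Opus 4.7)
The plan is to mimic the proof of Theorem~\ref{t:key-closing-lemma}, but to replace the step where the orbit is closed up into a periodic cycle by a step that extends it into an infinite orbit prescribed to escape to infinity. The two ingredients are again the defining property of $\rne$ and the basic Oka property with approximation and jet interpolation.

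First, I would choose a holomorphically convex compact set $L\subset X$ with $\overline U\subset L$ that is large enough so that any endomorphism of $X$ sufficiently close to $f$ on $L$ belongs to $V$. Since $p\in X\setminus\rne(f)$, unpacking the definition of $\rne$ produces an endomorphism $g\in V$ that is close to $f$ on $L$, a point $q\in U$, and a smallest integer $m\geq 1$ such that $q,g(q),\dots,g^{m-1}(q)\in L$ while $r_0:=g^m(q)\notin L$.

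Next, I would pick a closed discrete sequence $(r_i)_{i\geq 1}$ in $X\setminus L$ with $r_i\to\infty$, so that
\[ T=\{q,g(q),\dots,g^{m-1}(q)\}\cup\{r_i:i\geq 0\} \]
is a closed $0$-dimensional analytic subvariety of $X$. Using a Stein embedding of $X$ and a tubular retraction, I would build a continuous map $\phi:X\to X$ that equals $g$ on an open neighbourhood of $L$ and is holomorphic on disjoint small neighbourhoods of the $r_i$ with $\phi(r_i)=r_{i+1}$.

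Finally, I would apply the basic Oka property with approximation on $L$ and $0$-th order jet interpolation along $T$ to deform $\phi$ to $h\in\End\,X$ that agrees with $\phi$ on $T$ and is arbitrarily close to $g$ on $L$. For sufficiently fine approximation $h\in V$, and by construction
\[ h^i(q)=g^i(q)\ \text{for}\ 0\leq i\leq m,\qquad h^{m+i}(q)=r_i\ \text{for}\ i\geq 0. \]
Hence $h^n(q)\to\infty$, so $q\in U$ is an escaping point of $h\in V$.

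The only feature not already present in the proof of Theorem~\ref{t:key-closing-lemma} is that the interpolation set $T$ is now infinite. This causes no difficulty: any closed discrete subset of a complex manifold is automatically a closed analytic subvariety of dimension~$0$, so the version of the Oka property recalled in Remark~1(d) still applies verbatim.
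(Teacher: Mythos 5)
Your proof is correct, but it takes a genuinely different route from the paper's. The paper constructs a Cauchy sequence $(h_n)$ in $\End\,X$, one step at a time: at stage $n$ it applies the basic Oka property with a \emph{finite} interpolation set to produce $h_n$ agreeing with $h_{n-1}$ along the first $m+n-1$ points of the orbit, approximating $h_{n-1}$ to within $\epsilon/2^{n+1}$ on the $n$-th member of an exhaustion $(K_n)$, and pushing the $(m+n)$-th orbit point into $K_{n+1}\setminus K_n$; the limit $h=\lim h_n$ then lies in $V$ and has $q$ escaping. You instead prescribe the whole escaping tail $(r_i)$ in advance and invoke the Oka property \emph{once}, interpolating along the infinite closed discrete set $T=\{q,\dots,g^{m-1}(q)\}\cup\{r_i\}$, which is indeed a valid closed $0$-dimensional analytic subvariety, so Remark~1(d) applies. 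Your approach is shorter and conceptually cleaner; the paper's iterative scheme has the modest advantages of only ever invoking interpolation at finitely many points (so the existence of the continuous model map is more obviously routine) and of choosing the escaping orbit adaptively rather than specifying it at the outset. Two small points you should make explicit to be airtight: first, the $\epsilon$/$2\epsilon$ bookkeeping (choose $L$ and $\epsilon$ with $\{h:\lVert h-f\rVert_L\leq 2\epsilon\}\subset V$, take $g$ within $\epsilon$ of $f$ on $L$, and approximate $\phi$ by $h$ within $\epsilon$ of $g$ on $L$); second, the points $q,g(q),\dots,g^{m-1}(q)$ are automatically pairwise distinct by minimality of $m$ (were $g^i(q)=g^j(q)$ with $i<j\leq m-1$, one would get $g^m(q)=g^{m-(j-i)}(q)\in L$), and the $r_i$ should be chosen distinct and disjoint from this set so that $T$ really is discrete and the prescription $\phi(r_i)=r_{i+1}$ is consistent.
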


\begin{proof}
Choose for convenience an embedding of $X$ in some affine space and let $\lVert\cdot\rVert$ be the sup-norm there.  Take a holomorphically convex compact subset $K$ of $X$ and $\epsilon>0$ such that $\{g\in \End\,X : \lVert f-g\rVert_K\leq 2\epsilon\}\subset V$.  Since $p\in X\setminus \rne(f)$, there is $q\in U$ and $h_0\in \End\,X $ such that $\|f-h_0\|_K<\epsilon$ and $h_0^m(q)\not\in K$ for some $m\geq 0$.  Let $q_j=h_0^j(q)$, $j=0,\ldots,m$.

Exhaust $X$ by holomorphically convex compact subsets $K_n$, $n\geq 0$, such that $K_0=K$.  We may assume that $q_m\in K_1\setminus K_0$.  Applying the basic Oka property with approximation and interpolation, we obtain $h_1\in \End\,X $ such that $\lVert h_1-h_0\rVert_{K_0}<\epsilon/2$, $h_1(q_j)=q_{j+1}$ for $j=0,\ldots,m-1$, and $h_1(q_m)\in K_2\setminus K_1$.  Continuing in this way, we obtain a sequence $(h_n)$ in $\End\,X $ such that for all $n\geq 1$,
\[ \lVert h_{n+1}-h_n\rVert_{K_n}<\frac{\epsilon}{2^{n+1}}, \] 
\[ h_n^{j}(q)=h_{n-1}^{j}(q),\quad \textrm{for } j=0,\ldots,m+n-1, \]
and
\[ h_n^{m+n}(q) \in K_{n+1}\setminus K_n. \]
Hence, $(h_n)$ converges to an endomorphism in $V$ for which $q$ is escaping.
\end{proof}

The final claims in parts (b) and (c) of Theorem \ref{t:main-theorem} now follow from Propositions \ref{p:rne-equals-Fatou} and \ref{p:escaping} and the equivalence of transitivity and micro-transitivity for a continuous action of a Polish group on a Polish space (theorem of Effros; see the introduction to \cite{AL2020} and the references there).  

The proof of Theorem \ref{t:main-theorem} is complete.

\section{Fatou components of generic endomorphisms} 
\label{sec:Fatou}

\noindent
Let us first review a few more definitions.  A Fatou component $U$ of an endomorphism $f$ of a complex manifold is wandering if the sets $f^n(U)$, $n\geq 0$, are mutually disjoint.  The Fatou component $U$ is recurrent if for some point $p$ in $U$, a subsequence of $(f^n(p))$ converges to a point $q$ in $U$ (so $q$ is an $\omega$-limit point of $p$).  Let us call $U$ {\it pre-recurrent} (this is not standard terminology) if some point in $U$ has an $\omega$-limit point $q$ in the Fatou set of $f$.  Being an $\omega$-limit point, $q$ is non-wandering.  Clearly, a Fatou component in the basin of attraction of an attracting cycle is pre-recurrent.

The Fatou component $U$ is non-wandering if there are $m < n$ such that $f^m(U)$ and $f^n(U)$ intersect.  Write $n=m+k$, $k\geq 1$.  If the manifold is Oka-Stein and $f$ is generic, then the Fatou set of $f$ is forward invariant by Theorem \ref{t:main-theorem}; assume this.  Then $f^m(U)$ lies in a Fatou component $V$, and $V$ and $f^k(V)$ intersect, so $f^k(V)\subset V$.  Thus $V$ is periodic and $U$ is preperiodic.  (In our usage, a periodic component is preperiodic and a recurrent component is pre-recurrent.)

\begin{remark}  \label{r:pre-recurrent}
As mentioned above, if $f$ is an endomorphism of an Oka-Stein manifold, then $\rne(f)\cap\Omega_f = \att(f)$, and by Theorem \ref{t:main-theorem}(b), if $f$ is generic, then $\rne(f)=F_f$.  Hence, by Lemma \ref{l:basins}(b), every pre-recurrent Fatou component of a generic endomorphism lies in the basin of attraction of an attracting cycle.
\end{remark}

In the case of the complex plane $\C$, our results show that a generic endomorphism does not have parabolic cycles, Siegel discs, or Baker domains.  It only has Fatou components of one type -- or perhaps two, if dynamically bounded wandering domains exist, which is an open question.  (For dynamics of entire functions, see the survey \cite{Schleicher2010}.)

\begin{theorem}   \label{t:entire}
A point in the Fatou set of a generic endomorphism $f$ of $\C$ is attracted to an attracting cycle or lies in a dynamically bounded wandering domain.  Every Fatou component of $f$ is a disc.
\end{theorem}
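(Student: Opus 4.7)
The geometric half of the theorem is immediate from Theorem \ref{t:main-theorem}. Each Fatou component $U$ of a generic $f$ is Runge in $\C$ by part (b), and a connected Runge open subset of $\C$ is simply connected. Since $J_f \neq \varnothing$ by part (c), $U \neq \C$, so the Riemann mapping theorem identifies $U$ with the unit disc.

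For the dynamical half, the crucial input is again Theorem \ref{t:main-theorem}(b), which identifies $F_f$ with $\rne(f)$ and thus forces every point of $F_f$ to have a forward orbit contained in some compact subset of $\C$. This disposes of the wandering case at once: any wandering Fatou component of $f$ is automatically dynamically bounded. It remains to show that every pre-periodic Fatou component lies in the basin of attraction of an attracting cycle. By the complete invariance of $F_f$ from Theorem \ref{t:main-theorem}(b), it suffices to consider a periodic component $U$ of period $k$ and show that $g = f^k \colon U \to U$ has an attracting fixed point in $U$.

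I would then invoke the standard classification of periodic Fatou components of an entire function as attracting basin, parabolic basin, Siegel disc, or Baker domain (see \cite{Schleicher2010}) and eliminate the last three possibilities using the generic hypothesis. A parabolic basin would require a non-hyperbolic periodic point of $f$ on $\partial U$ and a Siegel disc a non-hyperbolic periodic point of $f$ in $U$; both are ruled out by Theorem \ref{t:main-theorem}(a). A Baker domain would require $g^n \to \infty$ locally uniformly on $U$, which is incompatible with $F_f = \rne(f)$. Hence $U$ is an attracting basin, completing the proof.

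I expect the elimination of Baker domains to be the conceptually delicate point, since it is counter-intuitive for transcendental endomorphisms but is forced on us by the $\rne$-description of $F_f$. An alternative that avoids the classification is to apply the Denjoy--Wolff theorem to $g$ on the disc model of $U$; hyperbolicity rules out $g$ being an elliptic automorphism. Either $g$ has a fixed point in $U$, in which case the Schwarz lemma combined with hyperbolicity forces the fixed point to be attracting, or iterates of $g$ converge locally uniformly to a single point $w \in \overline{U} \subset \widehat{\C}$ with $w \notin U$. Relative compactness of orbits excludes $w = \infty$, so $w \in \partial U \subset J_f$; continuity of $f^k$ gives $g(w) = w$, making $w$ a periodic point of $f$; and hyperbolicity rules out both the attracting case (which would contradict $w \in J_f$) and the repelling case (which would contradict $g^n(p) \to w$).
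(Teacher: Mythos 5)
Your proof is correct but genuinely diverges from the paper's at both stages, and the comparison is instructive.

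For the geometric half, you deduce simple connectivity from the Runge property of Fatou components (Theorem \ref{t:main-theorem}(b)): a connected open Runge subset of $\C$ has connected complement in $\widehat\C$, hence is simply connected, and since $J_f\neq\varnothing$ it is a proper domain, so Riemann gives a disc. The paper instead observes that a generic endomorphism of $\C$ is transcendental, invokes Baker's theorem that iterates escape to $\infty$ on any multiply-connected component of a transcendental entire function, and gets a contradiction with $F_f=\rne(f)$. Your route is arguably cleaner here, since it uses only part (b) and avoids the genericity-of-transcendence observation and Baker's theorem.

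For the dynamical half, the paper deliberately avoids the classification of periodic Fatou components (it says so explicitly): for a non-recurrent periodic component it extracts a locally uniform limit of iterates with image in $J_f$, uses the absence of interior in $J_f$ (Proposition \ref{p:big-proposition}) to force the limit to be a constant $p$ fixed by $f^m$, and then uses Theorem \ref{t:main-theorem}(a) to conclude $p$ is repelling, a contradiction. You instead either invoke the classification (attracting/parabolic/Siegel/Baker) and rule out the last three via hyperbolicity and $F_f=\rne(f)$, or apply Denjoy--Wolff on the disc model and rule out elliptic automorphisms (hyperbolicity), boundary convergence to $\infty$ ($\rne$), and a boundary fixed point that is attracting (it lies in $J_f$) or repelling (it attracts $U$). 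Both of your routes are standard and valid; the classification route is the quickest, while the paper's argument is the most self-contained. One small caution on the Denjoy--Wolff alternative: the conclusion that the iterates converge to a \emph{single} point $w\in\partial U$ in $\C$ requires transferring boundary convergence through the Riemann map, which in principle involves prime ends; the paper sidesteps this by taking any constant limit function directly and using the no-interior property of $J_f$, which you may prefer to cite if you want to avoid that subtlety.
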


Hoping that it may be of interest to the reader, we give a proof that does not use the classification of Fatou components for entire functions.

\begin{proof}
First note that transcendental functions are generic among all entire functions.  On a multiply-connected Fatou component of a transcendental function, the iterates of the function converge to infinity (this is noted by Baker at the beginning of the proof of Theorem 1 in \cite{Baker1975}; see also \cite[Theorem 2.5]{Schleicher2010}).  By Theorem \ref{t:main-theorem}, points in the Fatou set of a generic endomorphism $f$ of $\C$ have relatively compact orbits and the Fatou set is not all of $\C$.  Hence, the Fatou components of $f$ are discs.  Also, a wandering domain of $f$ cannot be escaping or oscillating: it must be dynamically bounded.

Now let $U$ be a periodic component of $F_f=\rne(f)$ of period $m$.  If $U$ is recurrent, then $U$ lies in the basin of attraction of an attracting cycle by Remark \ref{r:pre-recurrent}.  Next assume that $U$ is not recurrent and take a sequence of iterates of $f^m$ converging locally uniformly on $U$ to a limit function $g$ with image in $J_f$.  Since $J_f$ has no interior by Theorem \ref{t:main-theorem}, $g$ is constant, and its value $p$ is a fixed point of $f^m$.  By Theorem \ref{t:main-theorem} once again, $p$ is hyperbolic and hence repelling, which is absurd.
\end{proof}

Motivated by Theorem \ref{t:entire}, we pose the following open question.

\smallskip\noindent
{\bf Question.}  Are all the Fatou components of a generic endomorphism $f$ of an Oka-Stein manifold pre-recurrent?  Equivalently (by Remark \ref{r:pre-recurrent}), is the Fatou set of $f$ the union of the basins of attraction of the attracting cycles of $f$?

\smallskip
We will not hazard a conjecture as to whether the answer is affirmative or negative.  Theorem \ref{t:main-theorem}(a) shows that arbitrarily small perturbations of an endomorphism can destroy all non-hyperbolic periodic points at once.  An affirmative answer would have a similar but apparently much stronger consequence, namely that all non-pre-recurrent Fatou components can be destroyed at once by arbitrarily small perturbations.

We begin to shed some light on the question with the following result.  In the next section, we reformulate the question in several different ways using the notion of chain-recurrence.

\begin{theorem}   \label{t:non-recurrent-dense}
Let $X$ be an Oka-Stein manifold.  The set of endomorphisms with a non-pre-recurrent Fatou component is dense in $\End\,X$.
\end{theorem}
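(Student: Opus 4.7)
My plan is to establish the stronger statement: for every $f_0\in \End\,X$ and every basic neighbourhood $V=\{g\in\End\,X : d(g,f_0)|_{K_0}<\epsilon\}$ with $K_0\subset X$ holomorphically convex compact and $\epsilon>0$, there is $g\in V$ possessing a Fatou component on which the iterates $(g^n)$ converge locally uniformly to the point at infinity.  Such a component contains no $\omega$-limit point in $X$ at all, hence is automatically non-pre-recurrent.

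To construct such a $g$, I would first fix a holomorphically convex compact exhaustion $K_0\subset K_1\subset\cdots$ of $X$, points $p_n\in K_{n+1}\setminus K_n$, and small pairwise disjoint coordinate balls $\mathscr{D}_n$ centred at $p_n$ whose closures are also disjoint from $K_0$.  Then, mimicking the inductive construction in the proof of Proposition \ref{p:escaping}, I would build $g$ as the limit of a Cauchy sequence $h_0=f_0,h_1,h_2,\ldots$ in $\End\,X$.  At stage $n$, apply the basic Oka property with approximation to a continuous map $\phi_n:X\to X$ that is holomorphic near a holomorphically convex compact set $L_n$ containing $K_{n-1}$ together with the closures of $\mathscr{D}_0,\ldots,\mathscr{D}_{n-1}$: set $\phi_n=h_{n-1}$ on a neighbourhood of $L_n\setminus\overline{\mathscr{D}_{n-1}}$ and $\phi_n$ equal to the constant $p_n$ on a neighbourhood of $\overline{\mathscr{D}_{n-1}}$.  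Choosing the stage-$n$ error summable to less than $\epsilon$ on $K_0$, and small enough on each $\overline{\mathscr{D}_k}$ with $k\leq n-1$ to preserve the strict inclusions $h_m(\mathscr{D}_k)\Subset\mathscr{D}_{k+1}$ already secured (for all $m\geq k+1$), the limit $g$ lies in $V$ and satisfies $g(\mathscr{D}_n)\subset\mathscr{D}_{n+1}$ for every $n\geq 0$.

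With $g$ in hand, iteration gives $g^k(\mathscr{D}_0)\subset\mathscr{D}_k$ for all $k\geq 0$.  Since $p_k\to\infty$ and the $\mathscr{D}_k$ have bounded chart diameter, the sets $\mathscr{D}_k$ escape every compact subset of $X$, so $g^k\to\infty$ uniformly on $\mathscr{D}_0$; in particular $\mathscr{D}_0\subset F_g$.  Let $W$ be the Fatou component of $g$ containing $\mathscr{D}_0$.  By normality of $(g^k)$ on $W$, every subsequence of $(g^k|_W)$ has a sub-subsequence converging locally uniformly to either a holomorphic map $W\to X$ or to $\infty$; since any such limit must agree with $\infty$ on the open set $\mathscr{D}_0\subset W$, it is forced to be $\infty$.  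Hence $g^k\to\infty$ locally uniformly on $W$, every orbit in $W$ escapes to infinity, and $W$ is a non-pre-recurrent Fatou component of $g$.

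The principal obstacle is the technical matter of arranging the compact sets $L_n$ to be holomorphically convex while $\phi_n$ remains holomorphic near them, since $L_n$ is assembled from $K_{n-1}$ and a growing family of distant chart balls, and of ensuring that the inclusions $g(\mathscr{D}_n)\subset\mathscr{D}_{n+1}$ created at stage $n+1$ survive all subsequent tail perturbations.  Both points are handled exactly as in the inductive Oka arguments already used in the paper (notably in Proposition \ref{p:escaping}): choose the $\mathscr{D}_k$ small enough and spaced far enough apart along a strictly plurisubharmonic exhaustion that $K_{n-1}\cup\overline{\mathscr{D}_0}\cup\cdots\cup\overline{\mathscr{D}_{n-1}}$ is holomorphically convex, and take stage-$n$ errors on the already-used chart balls small enough that the previously established strict inclusions absorb the remainder of the perturbations.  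Everything else in the argument is routine.
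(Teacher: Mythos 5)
Your argument is correct in outline and takes a genuinely different route from the paper. The paper's proof is much shorter: it invokes Hakim's theorem on parabolic fixed points, perturbing $f$ (via Oka jet interpolation at a single point $p$ far from $K$) so that $p$ becomes a fixed point with 2-jet $(z_1+z_1^2,\ldots,z_n+z_n^2)$; Hakim supplies an invariant domain $U$ with $p\in\partial U$ on which $g^k\to p$ but with $(g^k)$ not normal at $p$, so the Fatou component containing $U$ has all orbits converging to the Julia point $p$ and is non-pre-recurrent. You instead manufacture a Baker-type escaping domain directly, by upgrading the inductive Oka construction of Proposition~\ref{p:escaping} from an escaping \emph{point} to an escaping \emph{ball} $\mathscr{D}_0$. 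Your approach is more elementary and self-contained (no appeal to Hakim's hard local theorem, just approximation), and it exhibits the non-pre-recurrent component as one where orbits go to infinity rather than to a parabolic boundary point; but it requires more bookkeeping. The one technical point you flag but do not fully settle --- that $K_{n-1}\cup\overline{\mathscr{D}_0}\cup\cdots\cup\overline{\mathscr{D}_{n-1}}$ can be arranged to be $\mathscr{O}(X)$-convex --- is a genuine extra burden relative to the paper's proof (and to Proposition~\ref{p:escaping}, which interpolates at finitely many points rather than on neighbourhoods of balls, so this issue never arises there). It is standard that one can attach a small coordinate ball at a point separated from $K$ by a holomorphic function without enlarging the hull, so the gap is routine, but it should be stated and justified, e.g.\ by choosing $\mathscr{D}_k$ inside sublevel sets of a strictly plurisubharmonic exhaustion strictly between $K_k$ and $K_{k+1}$. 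With that filled in, the argument is complete.
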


It follows that the property of the entire Fatou set consisting of basins of attraction can be destroyed by an arbitrarily small perturbation.

To produce non-pre-recurrent Fatou components, we use a theorem of
Hakim \cite[Theorem 1.6]{Hakim1997} (see also \cite{Abate2015} and \cite{AR2014}).  She proved that for every endomorphism $f$ of a neighbourhood of the origin $0$ in $\C^n$, fixing $0$ with a suitable 2-jet there, for example $f(z_1,\ldots,z_n)=(z_1+z_1^2, \ldots, z_n+z_n^2)$, there is a bounded connected open set $U$ in $\C^n$ with $0\in\partial U$ and $f(U)\subset U$, such that $f^k(z)\to 0$ as $k\to\infty$ for every $z\in U$, but the family of iterates of $f$ is not normal on any neighbourhood of $0$.  It follows that an endomorphism of a complex manifold with a suitable 2-jet at a fixed point $p$ has a non-pre-recurrent Fatou component with $p$ in its boundary.

\begin{proof}
Let $f\in\End\, X$.  By Hakim's theorem, it suffices to show that by an arbitrarily small perturbation of $f$ we can obtain an endomorphism $g$ with a fixed point at which $g$ has a prescribed 2-jet.

Let $K$ be a holomorphically convex compact subset of $X$.  Take $p\in X\setminus K$.  Let $\phi:X\to X$ be continuous, equal to $f$ on a neighbourhood of $K$, holomorphic on a neighbourhood of $p$, and with any prescribed 2-jet at $p$.  Since $X$ is Stein and Oka, the basic Oka property with approximation and jet interpolation holds for maps $X\to X$, so we can deform $\phi$ to an endomorphism $g$ of $X$, arbitrarily close to $f$ on $K$, and with the prescribed 2-jet at $p$.
\end{proof}

\section{Holomorphic Conley theory} 
\label{sec:Conley}

\noindent
In the final section, we bring together our results on generic dynamics on Oka-Stein manifolds and Conley's fundamental theorems on general dynamical systems \cite{Conley1978} as adapted to the noncompact setting by Hurley \cite{Hurley1992}.  We start by summarising the necessary definitions and stating the two fundamental theorems.

Let $X$ be a locally compact second countable metric space and let $f:X\to X$ be continuous.  Choose a metric $d$ on $X$ compatible with the topology of $X$.  Let $\epsilon: X\to(0,\infty)$ be continuous.  A finite sequence $x_0, x_1,\ldots, x_n$, $n\geq 1$, of points in $X$ is an $\epsilon$-chain or $\epsilon$-pseudo-orbit of length $n$ if $d(f(x_j), x_{j+1})<\epsilon (f(x_j))$ for $j=0,\ldots,n-1$.  A point $p$ in $X$ is chain-recurrent for $f$ if for every function $\epsilon$, there is an $\epsilon$-chain that begins and ends at $p$.  We denote by $C_f$ the set of chain-recurrent points of $f$.  Clearly, a non-wandering point is chain-recurrent.

A nonempty open subset $U$ of $X$ is absorbing for $f$ if $\overline{f(U)}\subset U$.  The closed set $A=\bigcap\limits_{n\geq 0}\overline{f^n(U)}$ is the attractor determined by $U$ (this is a decreasing intersection).  The basin of $A$ relative to $U$ is the open set $B(A,U)=\bigcup\limits_{n\geq 0}f^{-n}(U)\supset A$.  Finally, the basin of $A$ is the open set $B(A)=\bigcup B(A,U)$, where $U$ ranges over the absorbing sets that determine $A$.

The first fundamental theorem \cite[Theorem 1]{Hurley1992} states that
\[ X\setminus C_f = \bigcup\limits_A B(A)\setminus A, \]
where the union is taken over all the attractors $A$ of $f$.  The right-hand side of the equation is independent of the choice of the metric $d$ and invariant under topological conjugation, so $C_f$ is as well.  The equation also shows that $C_f$ is closed.

An equivalence relation is defined on $C_f$ by declaring points $p$ and $q$ equivalent if for every continuous $\epsilon:X\to (0,\infty)$, there is an $\epsilon$-chain from $p$ to $q$ and an $\epsilon$-chain from $q$ to $p$.  The equivalence classes are called chain-recurrence classes.

The second fundamental theorem \cite[Theorem 2]{Hurley1992} states that there is a complete Lyapunov function $L:X\to\mathbb R$ for $f$.  It has the following properties.
\begin{enumerate}
\item  $L$ is continuous.
\item  $L(f(x))\leq L(x)$ for all $x\in X$, with equality if and only if $x\in C_f$.
\item  $L$ is constant on each chain-recurrence class and takes different values on different classes.
\item  If $C$ and $C'$ are distinct classes such that for each $\epsilon$ there is an $\epsilon$-chain from $C$ to $C'$, then $L(C) > L(C')$.
\item  $L(C_f)$ is nowhere dense in $\mathbb R$.
\end{enumerate}
We can think of $L$ as denoting \lq\lq height\rq\rq, so that points in $X\setminus C_f$ move down with time, but points in $C_f$ stay at the same height.

We now specialise to the case of $X$ being an Oka-Stein manifold and $f:X\to X$ being holomorphic.

\begin{theorem}   \label{t:Conley-basics}
A generic endomorphism $f$ of an Oka-Stein manifold $X$ has the following properties.
\begin{enumerate}
\item[(a)]  The Julia set $J_f$ lies in a single chain-recurrence class.
\item[(b)]  Each attracting cycle is an attractor and a chain-recurrence class.
\item[(c)]  The basin of attraction of an attracting cycle contains no chain-recurrent points except the attracting cycle itself.
\item[(d)]  Every point in a non-pre-recurrent Fatou component is chain-recurrent and lies in the same chain-recurrence class as the Julia set.
\item[(e)]  The union of the non-pre-recurrent Fatou components equals the set of wandering chain-recurrent points.
\end{enumerate}
\end{theorem}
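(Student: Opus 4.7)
The plan is to combine three properties of a generic endomorphism established in Theorem~\ref{t:main-theorem}---forward invariance and chaos of $f$ on $J_f$, the transitivity statement that $\bigcup_{n\geq 0} f^n(U)$ is dense in $X$ for every open $U$ meeting $J_f$, and the identity $\Omega_f = J_f \cup \att(f)$ from Proposition~\ref{p:big-proposition}---with Hurley's first fundamental theorem $X\setminus C_f = \bigcup_A B(A)\setminus A$. The common mechanism for parts (a) and (d) will be the construction of an $\epsilon$-pseudo-orbit by concatenating a finite segment of an honest orbit with a small perturbation, the perturbation being possible because the pivot point lies in (or close to) $J_f$, where transitivity supplies the next leg of the chain.

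For (a), given $p,q \in J_f$ and a continuous $\epsilon>0$, I would take the segment $p, f(p),\dots, f^N(p)$ with zero jumps, note that $f^N(p)\in J_f$ by forward invariance, and then apply Theorem~\ref{t:main-theorem}(e) at $f^N(p)$ to find $y$ close to $f^N(p)$ whose iterate $f^M(y)$ is arbitrarily close to $q$; the concatenated chain reaches $q$. For (b), I would build a Conley-absorbing neighbourhood of the attracting cycle $A=\{p_0,\dots,p_{m-1}\}$ by combining local contraction of $f^m$ at each $p_i$ with continuity of $f$, producing open $V_i\ni p_i$ with $\overline{f(V_i)}\subset V_{i+1\bmod m}$, so that $U=V_0\cup\dots\cup V_{m-1}$ is absorbing and $\bigcap_n\overline{f^n(U)}=A$. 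The points of $A$ are mutually chain-equivalent through the cycle. To show $A$ is a full chain-recurrence class, I must rule out equivalence with any $q \notin A$: if $q \in B(A) \setminus A$ then $q \notin C_f$ by the first fundamental theorem; if $q \notin B(A)$ I would shrink $\epsilon$ so that $\epsilon(z) < d(z,\partial U)$ on $\overline{f(U)}$, forcing every $\epsilon$-chain starting in $A$ to stay in $U$ and hence never reach $q$. Part (c) is then an immediate consequence of the first fundamental theorem applied to the attractor $A$.

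For (d), take $p$ in a non-pre-recurrent Fatou component. Its orbit is relatively compact by Proposition~\ref{p:rne-equals-Fatou}, hence $\omega(p)$ is nonempty, and non-pre-recurrence forces $\omega(p)\subset J_f$. The pivot-and-perturb construction of (a), applied with a deep iterate $f^N(p)$ close to some $q\in \omega(p)$ and transitivity at $q$ producing a chain back to $p$, yields $\epsilon$-chains from $p$ to $p$ and from $p$ to $q$ for every $\epsilon$, placing $p$ in the chain-recurrence class of $J_f$. Part (e) then follows from (d) together with $\Omega_f=J_f\cup\att(f)$: points in non-pre-recurrent Fatou components are chain-recurrent by (d) and wandering as they lie in $F_f\setminus\att(f)$; conversely, a wandering chain-recurrent point lies in $F_f\setminus\att(f)$, hence in some Fatou component $V$, and $V$ cannot be pre-recurrent because Remark~\ref{r:pre-recurrent} would place $p$ in a basin of attraction while (c) forces any chain-recurrent point in a basin into the attracting cycle itself, contradicting wandering.

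The main obstacle is the second half of (b): showing that $A$ is all of its chain-recurrence class rather than merely a subset of one. The delicate point is the $\epsilon$-trapping argument on $\overline{f(U)}$, and parts (c)--(e) all ultimately depend on it, since they need attracting cycles to be genuine Conley attractors with no stray chain-recurrent points in their basins.
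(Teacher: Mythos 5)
Your proposal follows essentially the same route as the paper's proof: (a) and (d) via the pivot-and-perturb $\epsilon$-chain construction using the transitivity of Theorem~\ref{t:main-theorem}(e) together with $\omega(p)\subset J_f$, (b) via an absorbing neighbourhood of the cycle plus an $\epsilon$-trapping argument, (c) directly from Hurley's first fundamental theorem, and (e) from $\Omega_f=J_f\cup\att(f)$; you spell out (b) and (e) in more detail than the paper does, but the mechanism is identical.
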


\begin{proof}
(a) follows from the Julia set containing a dense orbit by Theorem \ref{t:main-theorem}(e).

(b)  A sufficiently small neighbourhood of an attracting cycle is absorbing.  The attracting cycle is the attractor determined by any such neighbourhood.  It is easily seen that one cannot escape from an attracting cycle along an $\epsilon$-pseudo-orbit if $\epsilon$ is small enough.

(c)  If $V$ is the basin of attraction of an attracting cycle $A$, then clearly $V\subset \bigcup f^{-n}(U)$, where $U$ is a neighbourhood of $A$ as in the proof of (b), so $V\subset B(A)$ and $V\setminus A\subset B(A)\setminus A\subset X\setminus C_f$.

(d)  Take a point $p$ in a non-pre-recurrent Fatou component.  The orbit of $p$ is relatively compact by Theorem \ref{t:main-theorem}(b), so $p$ has an $\omega$-limit point $q\in J_f$.  By Theorem \ref{t:main-theorem}(e), arbitrarily close to $q$ is a point $r$ whose orbit comes arbitrarily close to $p$.  Let us travel from $p$ along its orbit, jump to $q$ when we get close to it, jump to $r$, travel along its orbit, and jump to $p$ when we get close to it.  This makes an $\epsilon$-pseudo-orbit for an arbitrarily small $\epsilon$, beginning and ending at $p$, and visiting $J_f$ along the way, so (d) is proved.

(e) follows from the previous parts and $\Omega_f = J_f\cup\att(f)$ (Theorem \ref{t:main-theorem}(d)).
\end{proof}

The theorem shows that the \lq\lq Conley decomposition\rq\rq\ of an Oka-Stein manifold $X$, induced by a generic endomorphism $f$ of $X$, is as follows.
\begin{itemize}
\item  $C_f$ is partitioned into the following chain-recurrence classes.
\begin{itemize}
\item  The union of the Julia set and the non-pre-recurrent Fatou components.
\item  Each attracting cycle is a chain-recurrence class.
\end{itemize}
\item  $X\setminus C_f$ is the union of the basins of attraction of the attracting cycles with the cycles themselves removed.
\end{itemize}
Thinking of the Lyapunov function $L$ as denoting height, we can picture the union of the Julia set and the non-pre-recurrent Fatou components at constant height at the \lq\lq top\rq\rq\ of $X$.  The attracting cycles are at the \lq\lq bottom\rq\rq\ of $X$, each at a different height, and each point in a basin of attraction moves down with time towards the cycle that attracts it.

The following corollary of Theorem \ref{t:Conley-basics} is immediate.

\begin{corollary}   \label{c:equivalent-properties}
For a generic endomorphism of an Oka-Stein manifold $X$, the following are equivalent.
\begin{enumerate}
\item[(i)]  Every Fatou component is pre-recurrent.
\item[(ii)]  The Fatou set is the union of the basins of attraction of the attracting cycles.
\item[(iii)]  The robustly non-expelling set is the union of the basins of attraction of the attracting cycles.
\item[(iv)]  Every chain-recurrent point is non-wandering.
\item[(v)]  The periodic points are dense in the chain-recurrent set.
\item[(vi)]  The Julia set is a chain-recurrence class.
\item[(vii)]  The chain-recurrent set has no interior.
\end{enumerate}
If endomorphisms of $X$ satisfy the weak closing lemma for chain-recurrent points, then a generic endomorphism of $X$ has these properties.
\end{corollary}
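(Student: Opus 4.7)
My plan is to prove the equivalences by showing that each of (i)--(iii), (v)--(vii) is equivalent to (iv), using Theorems \ref{t:main-theorem} and \ref{t:Conley-basics} together with Remark \ref{r:pre-recurrent} and the general density theorem. The structural observation behind the proof is that for generic $f$, the chain-recurrent set decomposes as $C_f = \Omega_f \sqcup W$, where $W$ is the (open) union of the non-pre-recurrent Fatou components, so each condition is essentially saying \emph{$W$ is empty} in a different language.

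First I would dispose of the implications among (i), (ii), (iii). By Remark \ref{r:pre-recurrent}, every pre-recurrent Fatou component lies in the basin of an attracting cycle, while conversely every such basin consists of pre-recurrent Fatou components, so (i) $\Leftrightarrow$ (ii). The equivalence (ii) $\Leftrightarrow$ (iii) is immediate from $F_f=\rne(f)$ in Theorem \ref{t:main-theorem}(b). Next, (i) $\Leftrightarrow$ (iv) is Theorem \ref{t:Conley-basics}(e), which identifies the set of wandering chain-recurrent points with the union of the non-pre-recurrent Fatou components: (iv) says this set is empty, (i) says the same thing.

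For (iv) $\Leftrightarrow$ (v), I would use that $\Omega_f\subset C_f$ always and that periodic points are non-wandering, together with Proposition \ref{p:big-proposition} (or the general density theorem \cite[Corollary~1]{AL2020}), which says $\overline{\hyp(f)}=\Omega_f$ for generic $f$: if $C_f=\Omega_f$, then periodic points are dense in $C_f$; conversely, if periodic points are dense in $C_f$, then $C_f\subset\overline{\Omega_f}=\Omega_f$, giving (iv). For (iv) $\Leftrightarrow$ (vi), Theorem \ref{t:Conley-basics}(a) says $J_f$ is contained in a single chain-recurrence class $C$, and Theorem \ref{t:Conley-basics}(d) says every non-pre-recurrent Fatou component also lies in $C$, so $C=J_f$ precisely when no such components exist, which is (i), equivalently (iv). For (iv) $\Leftrightarrow$ (vii), note that $\Omega_f=J_f\cup\att(f)$ (Proposition \ref{p:big-proposition}), $J_f$ has no interior and $\att(f)$ is discrete and closed in $F_f$, so $\Omega_f$ has no interior; if $C_f=\Omega_f$ then (vii) holds, while conversely each non-pre-recurrent Fatou component is open and contained in $C_f$ by Theorem \ref{t:Conley-basics}(d), so (vii) forces all of them to be empty, giving (iv).

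Finally, for the concluding claim, the weak closing lemma for chain-recurrent points means $C_f\subset\clo(f)$ for every endomorphism of $X$. Combined with $\clo(f)=\Omega_f\subset C_f$ for generic $f$ (Proposition \ref{p:big-proposition}), this gives $C_f=\Omega_f$, i.e.\ condition (iv). I do not expect a real obstacle in the proof since every step reduces to a previously established result; the only point requiring care is the verification that in (vii) the non-pre-recurrent Fatou components are truly open subsets of $C_f$ (immediate, as Fatou components are open and Theorem \ref{t:Conley-basics}(d) places them in $C_f$), so that (vii) rules them out.
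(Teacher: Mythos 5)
Your proposal is correct and takes essentially the same approach as the paper, which declares the corollary ``immediate'' from Theorem~\ref{t:Conley-basics} and its accompanying description of the Conley decomposition; your writeup simply makes explicit the pivot observation $C_f = \Omega_f \sqcup W$ (with $W$ the union of non-pre-recurrent Fatou components) and checks that each condition asserts $W=\varnothing$. Your treatment of the concluding claim, deducing $C_f=\Omega_f$ from $C_f\subset\clo(f)$ together with $\clo(f)=\Omega_f\subset C_f$ (Proposition~\ref{p:big-proposition}), is a slightly more direct route than the paper's suggestion of rerunning the general density argument on $C_f$, but both are valid.
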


The weak closing lemma for chain-recurrent points\footnote{This lemma states that if $p$ is a chain-recurrent point of $f\in\End\,X$, $V$ is a neighbourhood of $p$ in $X$, and $W$ is a neighbourhood of $f$ in $\End\,X$, then there is an endomorphism in $W$ with a periodic point in~$V$.} would imply that periodic points are dense in the chain-recurrent set of a generic endomorphism, and thus give an affirmative answer to the open question posed in Section \ref{sec:Fatou}, just as the weak closing lemma for non-wandering points implies that periodic points are dense in the non-wandering set of a generic endomorphism \cite[Corollary 1(b)]{AL2020}.  The closing lemma for chain-recurrent points of diffeomorphisms of compact smooth manifolds, or rather the stronger connecting lemma for pseudo-orbits, is a major result in modern smooth dynamics \cite[Th\'eor\`eme~1.2]{BC2004}.

We conclude this section by determining the topological structure of each of the six sets of endomorphisms defined by the properties (i)--(vii) in the corollary.  According to the corollary, the symmetric difference of any two of them is meagre (meaning that the complement is residual, that is, contains a dense $G_\delta$).  A subset $E$ of a topological space $Y$ is said to be nearly open or have the Baire property if there is an open subset $U$ of $Y$ such that the symmetric difference $E\bigtriangleup U$ is meagre.  A set is nearly open if and only if it is the union of a $G_\delta$ set and a meagre set or, equivalently, the complement of a meagre set in an $F_\sigma$ set.  The nearly open subsets of $Y$ form a $\sigma$-algebra, the smallest $\sigma$-algebra containing all open subsets and all meagre subsets of $Y$.  (See \cite[Section~8.F]{Kechris1995}.)

\begin{theorem}   \label{t:nearly-open}
Let $X$ be an Oka-Stein manifold.  The subsets of $\End\, X$ defined by the properties {\rm (i)--(vii)} above and their complements are nearly open.
\end{theorem}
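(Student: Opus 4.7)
The plan is to prove that one of the seven subsets $S_1, \ldots, S_7$ of $\End\,X$ defined by properties (i)--(vii) has the Baire property, and then invoke Corollary \ref{c:equivalent-properties} to transfer this to the others. Indeed, the corollary provides a residual subset $\mathscr{G}$ of $\End\,X$ on which (i)--(vii) are equivalent, so each pairwise symmetric difference $S_i \bigtriangleup S_j$ is contained in the meagre complement $\End\,X\setminus \mathscr{G}$. Since the nearly open sets form a $\sigma$-algebra containing the open sets and the meagre sets, showing that one $S_i$ is nearly open settles the theorem, including the assertion about complements. The convenient choice is
\[ S_{\text{(iii)}} = \{f \in \End\,X : \rne(f) = B_f\}, \]
where $B_f$ denotes the union of the basins of attraction of the attracting cycles of $f$, because both $\rne(f)$ and $B_f$ are characterised by open conditions on the pair $(f,p)$.

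The key step is to show that the two subsets
\[ R = \{(f, p) \in \End\,X \times X : p \in \rne(f)\} \quad \text{and} \quad B = \{(f, p) \in \End\,X \times X : p \in B_f\} \]
of the Polish space $\End\,X \times X$ are both open. Openness of $R$ is immediate from the definition of $\rne$: a witnessing triple $(U, V, K)$ for $(f_0, p_0) \in R$ is also a witness for every $(f, p) \in V \times U$. For $B$, given that $p_0$ lies in the basin of an attracting $m$-cycle of $f_0$, I would fix a small open neighbourhood $W$ of a point of the cycle on which $f_0^m$ is a strict contraction in a Hermitian metric with $\overline{f_0^m(W)} \subset W$, and choose $k$ with $f_0^{km}(p_0) \in W$. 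For $(f, p)$ sufficiently close to $(f_0, p_0)$, continuity of the finite iterate $f^{km}$ and persistence of the contraction property on the compact set $\overline{W}$ imply $f^{km}(p) \in W$ and $\overline{f^m(W)} \subset W$, whereupon Banach's fixed-point theorem places $p$ in the basin of the unique attracting fixed point of $f^m$ in $W$, so $(f, p) \in B$.

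Once $R$ and $B$ are known to be open, the difference $R \setminus B$ is locally closed, in particular Borel, and its projection to the Polish space $\End\,X$ equals $\End\,X \setminus S_{\text{(iii)}}$, hence is analytic. Analytic subsets of Polish spaces have the Baire property, so $S_{\text{(iii)}}$ is nearly open, and the transfer argument of the opening paragraph propagates this to all $S_i$ and their complements. The only genuine technical obstacle is the openness of $B$: it amounts to combining persistence of hyperbolic attracting periodic points with continuity of the finite iteration map $\End\,X \times X \to X$, and then upgrading persistence of the attracting fixed point to persistence of the entire local basin via Banach's fixed-point theorem. Everything else is standard descriptive set theory for Polish spaces.
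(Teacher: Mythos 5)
Your proposal follows essentially the same route as the paper's own proof: you identify the set of pairs $(f,p)$ with $p\in\rne(f)$ and the set with $p$ in a basin of attraction as open subsets of the product Polish space, observe that their difference is Borel and projects to the complement of the set defined by property (iii), invoke the Lusin--Sierpi\'nski theorem that analytic sets are nearly open, and then use Corollary \ref{c:equivalent-properties} to transfer the conclusion to the remaining properties via meagre symmetric differences. The only cosmetic difference is that you spell out the openness of the basin set with a Banach fixed-point argument where the paper simply cites persistence of attracting cycles; the argument is correct and the logic is the same.
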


\begin{proof}
Consider the set $U$ of pairs $(p,f)$ in $X\times \End\, X$ such that $p\in\rne(f)$.  Let $V$ be the subset of pairs $(p,f)$ such that $p$ lies in the basin of attraction of some attracting cycle of $f$.  Clearly, $U$ is open.  We claim that $V$ is also open.  Let $(p,f)\in V$ and take a small neighbourhood $W$ of the attracting cycle of $f$ to which $p$ is attracted.  Then $f^m(p)\in W$ for some $m\geq 0$.  By persistence of attracting cycles, every endomorphism in a sufficiently small neighbourhood $Z$ of $f$ has an attracting cycle in $W$ with $W$ in its basin of attraction.  The pairs $(q,g) \in X\times Z$ such that $g^m(q)\in W$ lie in $V$ and form a neighbourhood of $(p,f)$.

We conclude that $U\setminus V$ is $G_\delta$ in $\End\, X$.  Let $\pi:X\times \End\, X\to \End\,X$ be the projection.  Then $\pi(U\setminus V)$ is an analytic subset of the Polish space $\End\, X$ \cite[Section 14.A]{Kechris1995}.  It is the set of endomorphisms $f$ for which the basins of attraction do not cover all of $\rne(f)$, that is, the complement of the set defined by property (iii).  By a theorem of Lusin and Sierpi\'nski \cite[Theorem 21.6]{Kechris1995}, an analytic set in a Polish space is nearly open.
\end{proof}

By Theorem \ref{t:non-recurrent-dense}, the set of endomorphisms of $X$ with a non-pre-recurrent Fatou component is dense in $\End\,X$.  It is the complement of the subset of $\End\,X$ defined by property (i).  By Theorem \ref{t:nearly-open}, it is the union of a $G_\delta$ set and a meagre set.  If the meagre set could be taken to be empty, then we could conclude that the negations of properties (i)--(vii) are generic and consequently that the weak closing lemma for chain-recurrent points fails in our setting.

\end{document}